\documentclass[11pt]{article}
\usepackage{amssymb, amsmath, amsthm, graphicx}
\usepackage[left=1in,top=1in,right=1in]{geometry}
\date{}
\allowdisplaybreaks

\theoremstyle{definition}
\newtheorem{theorem}{Theorem}[section]
\newtheorem{lemma}[theorem]{Lemma}
\newtheorem{definition}[theorem]{Definition}
\newtheorem{remark}[theorem]{Remark}
\newtheorem{corollary}[theorem]{Corollary}

\newtheorem{fact}[theorem]{Fact}
\counterwithin{equation}{section}

\title{A positive fraction Erd\H os-Szekeres theorem and its applications}
\author{Andrew Suk\thanks{Department of Mathematics, University of California at San Diego, La Jolla, CA, 92093 USA. Supported by NSF CAREER award DMS-1800746,  NSF award DMS-1952786, and an Alfred Sloan Fellowship. Email: {\tt asuk@ucsd.edu}.}\and Ji Zeng\thanks{Department of Mathematics, University of California at San Diego, La Jolla, CA, 92093 USA. Supported by NSF grant DMS-1800746. Email:
{\tt jzeng@ucsd.edu}.}}


\begin{document}

\maketitle

\begin{abstract}
A famous theorem of Erd\H os and Szekeres states that any sequence of $n$ distinct real numbers contains a monotone subsequence of length at least $\sqrt{n}$. Here, we prove a positive fraction version of this theorem.  For $n > (k-1)^2$, any sequence $A$ of $n$ distinct real numbers contains a collection of subsets $A_1,\ldots, A_k \subset A$, appearing sequentially, all of size $s=\Omega(n/k^2)$, such that every subsequence $(a_1,\ldots, a_k)$, with $a_i \in A_i$, is increasing, or every such subsequence is decreasing. The subsequence $S = (A_1,\ldots, A_k)$ described above is called \emph{block-monotone of depth $k$ and block-size $s$}.  Our theorem is asymptotically best possible and follows from a more general Ramsey-type result for monotone paths, which we find of independent interest. We also show that for any positive integer $k$, any finite sequence of distinct real numbers can be partitioned into $O(k^2\log k)$ block-monotone subsequences of depth at least $k$, upon deleting at most $(k-1)^2$ entries. We apply our results to mutually avoiding planar point sets and biarc diagrams in graph drawing.
\end{abstract}

\section{Introduction}\label{sec:intro}

In 1935, Erd\H os and Szekeres \cite{es35} proved that any sequence of $n$ distinct real numbers contains a monotone subsequence of length at least $\sqrt{n}$.  This is a classical result in combinatorics and its generalizations and extensions have many important consequences in geometry, probability, and computer science. See Steele \cite{St} for 7 different proofs along with several applications.

In this paper, we prove a positive fraction version of the Erd\H os-Szekeres theorem. We state this theorem using the following notion: A sequence $(a_1, a_2,\dots,a_{ks})$ of $ks$ distinct real numbers is said to be \emph{block-increasing} (resp. \emph{block-decreasing}) with \emph{depth} $k$ and \emph{block-size} $s$ if every subsequence $(a_{i_1},a_{i_2},\dots, a_{i_k})$, for $(j-1)s<i_j\leq js$, is increasing (resp. decreasing). We call a sequence \emph{block-monotone} if it's either block-increasing or block-decreasing. In other words, such a sequence consists of $k$ parts, each of size $s$ and appearing sequentially, such that the parts (i.e. blocks) are in a monotone position.

\begin{theorem}\label{main}
Let $k$ and $n > (k-1)^2$ be positive integers. Then every sequence of $n$ distinct real numbers contains a block-monotone subsequence of depth $k$ and block-size $s = \Omega(n/k^2)$. Furthermore, such a subsequence can be computed within $O(n^2\log n)$ time.
\end{theorem}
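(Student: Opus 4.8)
The plan is to recast the theorem geometrically and then reduce the generic case to a Ramsey-type statement about monotone paths. Identify $A=(a_1,\dots,a_n)$ with the point set $P=\{(i,a_i):1\le i\le n\}$ in the plane: increasing subsequences become chains going up and to the right, decreasing subsequences become chains going down and to the right, and a block-increasing subsequence of depth $k$ and block-size $s$ is exactly a monotone ``staircase'' of $k$ axis-parallel boxes whose $x$-projections are pairwise disjoint and ordered left to right, whose $y$-projections are pairwise disjoint and ordered bottom to top, and each of which contains at least $s$ points of $P$ (block-decreasing being the mirror image). So we must produce a monotone staircase of $k$ boxes each capturing an $\Omega(n/k^2)$-fraction of $P$. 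Two regimes are easy. Let $p$ and $q$ be the lengths of the longest increasing and longest decreasing subsequences; if $p\ge n/k$, slice such a subsequence into $k$ consecutive runs of length $\lfloor p/k\rfloor=\Omega(n/k^2)$ — consecutive runs are automatically separated in both coordinates — and symmetrically if $q\ge n/k$. So we may assume $p,q<n/k$; since the map $i\mapsto(\ell_{\mathrm{inc}}(i),\ell_{\mathrm{dec}}(i))$, recording the lengths of the longest increasing and decreasing subsequences ending at $i$, is injective into $[p]\times[q]$, we get $pq\ge n$, hence $k<p,q<n/k$: a balanced middle regime where no single chain or antichain is long enough.

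In the middle regime I would pass to the monotone-path formulation. Colour the complete graph on the ordered vertex set $\{1,\dots,n\}$ with $\{i,j\}$, $i<j$, red if $a_i<a_j$ and blue otherwise; this colouring is transitive, and increasing (resp.\ decreasing) subsequences of $A$ are precisely monochromatic red (resp.\ blue) monotone paths. The engine is a positive-fraction monotone-path theorem: any transitive $2$-colouring of $K_N$ admits vertex sets $V_1<\dots<V_k$, in the order of the ground set, each of size $\Omega(N/k^2)$, together with a colour $c$, such that all edges between consecutive $V_t,V_{t+1}$ have colour $c$. Pulling this back: every transversal $(v_1,\dots,v_k)$, $v_t\in V_t$, is a monochromatic monotone path, i.e.\ a block-monotone subsequence of $A$ of depth $k$ and block-size $\Omega(n/k^2)$, as desired. (The ``more general Ramsey-type result'' mentioned in the abstract is presumably a version handling more colours or more general host orderings.)

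To prove the engine I would again exploit the injective pair of length functions, now sending each vertex into a grid $[p']\times[q']$ with $p'q'\ge N$, and build each $V_t$ as the pre-image of a box $(s_{t-1},s_t]\times(\tau_{t-1},\tau_t]$ marching along the diagonal of this grid (or the anti-diagonal, for the other colour), choosing the cut vectors $s_\bullet$ and $\tau_\bullet$ \emph{adaptively} so that each diagonal box is heavy \emph{and} the resulting blocks come out simultaneously ordered in position and in value. The case $k=2$ already displays the mechanism: cut positions and values at their medians; the $2\times2$ array of quadrant counts has all four margins equal to $n/2$, which forces the two ``increasing'' quadrants to be equal and the two ``decreasing'' quadrants to be equal, so one antipodal pair has both cells of size $\ge n/4$ and yields a depth-$2$ block-monotone structure. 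Carrying out this balancing for $k$ boxes at once — keeping every box of size $\Omega(N/k^2)$ while preserving both the position order and the value order of the blocks — is the step I expect to be the main obstacle, and it is essentially the entire content of the monotone-path theorem.

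For the running time, $\ell_{\mathrm{inc}}(\cdot)$, $\ell_{\mathrm{dec}}(\cdot)$, the values $p,q$, and the chain/antichain decompositions are computed by patience sorting in $O(n\log n)$; in the easy regimes, extracting and slicing one long chain is $O(n)$; in the middle regime, scanning the $O(n)$ candidate positions for each of the $k$ cuts while maintaining prefix counts costs $O(n^2)$, and the binary searches locating the value cuts add the extra logarithmic factor, for a total of $O(n^2\log n)$.
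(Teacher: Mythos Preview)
Your reduction to a positive-fraction monotone-path statement is correct and matches the paper's strategy (the paper proves it for $q$ colours). But you have not proved the engine: you sketch a grid-box approach via the length-function map $i\mapsto(\ell_{\mathrm{inc}}(i),\ell_{\mathrm{dec}}(i))$ and then explicitly flag the general-$k$ balancing as ``the main obstacle.'' That is the whole theorem. Worse, the grid-box idea has a structural problem you do not address: preimages of boxes in $[p]\times[q]$ under the length-function map need not be ordered in the \emph{position} order on $[n]$ --- larger $\ell_{\mathrm{inc}}$ does not imply larger index --- so even with perfect mass balancing you would not automatically get $V_1<\cdots<V_k$. Your $k=2$ paragraph does not display this mechanism at all: there you cut in position and in value directly, not in the length-function grid, and that median trick does not iterate to $k$ boxes. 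The regime split into ``long chain / middle'' is also unnecessary once the engine is in hand.

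The paper's proof of the engine is short and quite different. First a depth-$1$ lemma: in any $q$-colouring of $K_N$, every $3^q$-subset contains a monochromatic monotone path of length $3$ (path Erd\H{o}s--Szekeres), so double-counting gives $\Omega(N^3/3^{3q})$ such paths; pigeonhole on the colour and then on the endpoint pair yields $p_1<p_2$ and a set $V'$ of $\Omega(N/q3^{3q})$ middle vertices with $(p_1,V',p_2)$ monochromatic. Then depth $k$: label each $v\in[n]$ by the vector $(b_1,\dots,b_q)$ where $b_i$ is the depth of the longest block-monotone path of block-size $s=\lceil n/(ck)^q\rceil$ in colour $i$ ending at $v$; if no $b_i$ reaches $k$ there are at most $k^q$ labels, so some label class $V$ has $|V|\ge n/k^q$, and applying the depth-$1$ lemma inside $V$ gives $(p_1,V',p_2)$ with $|V'|\ge s$, contradicting $f(p_1)=f(p_2)$. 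The algorithm is likewise different from yours: having fixed $s$, one computes by dynamic programming the longest monotone subsequence whose \emph{consecutive entries are $s$-gapped} (i.e.\ have $s$ intermediate entries between them in both position and value), testing $s$-gappedness in $O(\log n)$ per pair via $2$-D orthogonal range counting, for $O(n^2\log n)$ total.
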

\noindent We prove Theorem \ref{main} by establishing a more general Ramsey-type result for monotone paths, which we describe in detail in the next section. The theorem is also asymptotically best possible, see Remark~\ref{extreme_construction_remark}.

By a repeated application of Theorem \ref{main}, we can decompose any sequence of $n$ distinct real numbers into $O(k\log n)$ block-monotone subsequences of depth $k$ upon deleting at most $(k-1)^2$ entries. Our next result shows that we can obtain such a partition, where the number of parts doesn't depend on $n$.

\begin{theorem}\label{partition}
For any positive integer $k$, every finite sequence of distinct real numbers can be partitioned into at most $O(k^2\log k)$ block-monotone subsequences of depth at least $k$ upon deleting at most $(k-1)^2$ entries.
\end{theorem}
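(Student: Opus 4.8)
The plan is to deduce Theorem~\ref{partition} from Theorem~\ref{main} by a covering argument, the whole point being to replace, in the number of parts, the $\log n$ that a naive iteration forces by a $\log k$. As a warm‑up, note that iterating Theorem~\ref{main} already gives a partition into $O(k\log n)$ parts: as long as the current sequence has length $m>(k-1)^2$, Theorem~\ref{main} produces inside it a block‑monotone subsequence of depth $k$ (hence a valid part) containing an $\Omega(1/k)$‑fraction of its entries; since this part is extracted from the remaining entries it is automatically disjoint from all previously chosen parts, so the process builds an honest partition, and after $O(k\log n)$ steps fewer than $(k-1)^2$ entries are left, which we delete. In particular, when $\log n = O(k\log k)$ this is already a partition into $O(k^2\log k)$ parts, so the issue is genuinely only the case of very long sequences.

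To kill the dependence on $n$, I would pass to the linear‑programming relaxation of the set‑cover problem for the hypergraph $\mathcal{H}$ whose vertices are the entries of the sequence and whose hyperedges are the subsets that, with the inherited order, form block‑monotone subsequences of depth at least $k$ (viewing each entry $a_i$ as the planar point $(i,a_i)$, a hyperedge is a ``staircase of at least $k$ pairwise separated axis‑parallel boxes'' in these points). The two ingredients are: (i) a weighted form of Theorem~\ref{main} — for every probability distribution $\mu$ on the entries there is a hyperedge $F$ with $\mu(F)=\Omega(1/k^{2})$, except on a set of entries of total $\mu$‑mass controlled by the at most $(k-1)^2$ we are allowed to delete — which bounds the fractional cover number of $\mathcal H$ by $O(k^{2})$; and (ii) a bound on the combinatorial complexity of $\mathcal H$ (its VC dimension, or better the exponent of its dual shatter function) by an absolute constant, obtained by analysing the arrangement cut out in the point set by a few such box‑staircases. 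Granting (i) and (ii), the standard fact that a set system with fractional cover number $\tau^\ast$ and bounded dual shatter function admits an integral cover of size $O(\tau^\ast\log\tau^\ast)$ yields a cover of all but $(k-1)^2$ entries by $O(k^{2}\log k)$ hyperedges; replaying this cover greedily — at each step taking the hyperedge guaranteed by (i) \emph{inside} the still‑uncovered part — turns it into a partition, exactly as in the warm‑up.

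The main obstacle is step (ii). A block‑monotone subsequence of depth \emph{exactly} $k$ is a reasonably simple planar object, but a hyperedge of $\mathcal H$ has depth \emph{at least} $k$, so its staircase may consist of arbitrarily many boxes, and one must argue the complexity of several such staircases does not blow up with $k$. I expect the way around this is to restrict (i) to a canonical sub‑family of hyperedges — for instance block‑monotone subsequences whose boxes are as tight as possible around the points they contain, and whose depth is capped at, say, $2k$ — and to verify that this sub‑family still witnesses the fractional bound while forming a set system of bounded dual shatter function; the arrangement analysis then reduces to counting the cells determined by finitely many monotone staircase curves. A secondary but real technical point is the bookkeeping in (i): one has to show that distributions which are ``too concentrated'' to admit a heavy hyperedge are concentrated on at most $(k-1)^2$ entries, so that deleting exactly those entries — the sharp number inherited from Theorem~\ref{main} — suffices.
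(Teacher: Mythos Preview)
Your step (ii) is where the argument breaks. The hyperedges of $\mathcal H$ are point sets lying in a monotone staircase of at least $k$ axis-parallel boxes; this is a family with $\Theta(k)$ real parameters, and its VC dimension grows linearly in $k$, not $O(1)$. Concretely, on $m$ points placed along an increasing diagonal, the trace of such a staircase is an arbitrary union of $k$ disjoint intervals, a system that shatters sets of size $2k$. Capping the depth at $2k$ or tightening the boxes around their points does not help --- you still have $\Theta(k)$ boxes and hence VC dimension $\Theta(k)$; with depth unbounded the VC dimension is not bounded independently of $n$ at all. So the $\epsilon$-net bound gives only $\tau=O(d\,\tau^\ast\log\tau^\ast)$ with $d=\Theta(k)$. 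To reach $O(k^2\log k)$ you would then need $\tau^\ast=O(k)$ rather than the $O(k^2)$ you state; the unweighted Theorem~\ref{main} does produce a subsequence of density $\Omega(1/k)$, but you neither state nor prove a weighted version, and the proof of Theorem~\ref{main} (via Theorem~\ref{path}) goes through a pigeonhole on vertex labels that does not obviously survive reweighting.

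A second, independent gap is the conversion of a cover into a partition. If $F_1,\ldots,F_N$ cover the sequence, the disjointified pieces $G_i=F_i\setminus\bigcup_{j<i}F_j$ need not be block-monotone of depth $k$: entire blocks of $F_i$ may be emptied, dropping the depth below $k$, and even when all blocks survive, their sizes become unequal, which the definition forbids. Your alternative, ``taking the hyperedge guaranteed by (i) inside the still-uncovered part,'' is precisely the warm-up iteration and produces $O(k\log n)$ parts regardless of the cover. The paper sidesteps all of this with a direct constructive argument: it introduces $(k,t)$-configurations and $(k,l,t)$-patterns that record already-extracted block-monotone pieces together with their relative positions, shows (Lemma~\ref{partition_lemma_3}) that one application of Theorem~\ref{main} plus $O(k\log k)$ auxiliary extractions promotes a $(k,l,t)$-pattern to a $(k,l,t{+}1)$- or $(k,l{+}t,0)$-pattern, and after $O(k)$ such steps reaches a terminal pattern that two further lemmas dismantle directly.
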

\noindent Our proof of Theorem~\ref{partition} is constructive and implies an algorithm for the claimed partition whose time complexity is polynomial in $k$ and $n$, where $n$ is the length of the given sequence.

Theorem~\ref{partition} is inspired by a similar problem of partitioning planar point sets into convex-positioned clusters studied by P{\'o}r and Valtr~\cite{por2002partitioned}. A positive fraction Erd\H os-Szekeres-type result for convex polygons is given previously by B{\'a}r{\'a}ny and Valtr \cite{barany1998positive}.

We give two applications of Theorems~\ref{main}~and~\ref{partition}.

\medskip

\noindent \textbf{Mutually avoiding sets.}  Let $A$ and $B$ be finite point sets of $\mathbb{R}^2$ in \emph{general position}, that is, no three points are collinear. We say that $A$ and $B$ are \emph{mutually avoiding} if no line generated by a pair of points in $A$ intersects the convex hull of $B$, and vice versa.  Aronov et al.~\cite{aronov1991crossing} used the Erd\H os-Szekeres Theorem to show that every $n$-element planar point set $P$ in general position contains subsets $A,B\subset P$, each of size $\Omega(\sqrt{n})$, s.t. $A$ and $B$ are mutually avoiding. Valtr \cite{valtr1997mutually} showed that this bound is asymptotically best possible by slightly perturbing the points in an $\sqrt{n}\times \sqrt{n}$ grid.  Following the same ideas of Aronov et al., we can use Theorem~\ref{main} to obtain the following.

\begin{theorem}
\label{avoiding}For every positive integer $k$ there is a constant $\epsilon_k=\Omega(\frac{1}{k^2})$ s.t. every sufficiently large point set $P$ in the plane in general position contains $2k$ disjoint subsets $A_1,\dots,A_k,B_1,\dots,B_k$, each of size at least $\epsilon_k|P|$, s.t. every pair of sets $A=\{a_1,\dots,a_k\}$ and $B=\{b_1,\dots,b_k\}$, with $a_i\in A_i$ and $b_i\in B_i$, are mutually avoiding.
\end{theorem}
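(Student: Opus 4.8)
The plan is to carry over the argument of Aronov et al., using Theorem~\ref{main} wherever they invoke the Erd\H os--Szekeres theorem. Their proof has the following skeleton: after a generic rotation ensuring that no two points of $P$ share an $x$-coordinate, pick a line $\ell$ splitting $P$ into two parts of sizes $\lfloor |P|/2\rfloor$ and $\lceil |P|/2\rceil$; in each part read off the sequence of $y$-coordinates in order of increasing $x$-coordinate and extract a long monotone subsequence by Erd\H os--Szekeres; then check, for an appropriate choice of $\ell$ and after normalising orientations by a reflection, that any two monotone point sequences on opposite sides of $\ell$ with the relevant orientations are mutually avoiding. The feature I want to use is that this last check is purely order-theoretic: it inspects only the monotone order type of each of the two sequences, never their actual coordinates. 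Hence, if one replaces ``a monotone subsequence'' by ``a block-monotone subsequence of depth $k$'' --- a packet of $k$ disjoint blocks every transversal of which is a monotone sequence of the prescribed orientation --- the same conclusion holds simultaneously for every transversal.

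Concretely: rotate $P$ generically, and let $\ell$ be the vertical line with $P_1$ the $\lfloor |P|/2\rfloor$ points of $P$ strictly to its left and $P_2$ the rest. Sort $P_1$ by $x$-coordinate and apply Theorem~\ref{main} with depth $k$ to its sequence of $y$-coordinates; for $|P|$ large enough $|P_1|>(k-1)^2$, so this yields pairwise disjoint $A_1,\dots,A_k\subseteq P_1$, appearing sequentially in the $x$-order, each of size $s=\Omega(|P_1|/k^2)=\Omega(|P|/k^2)$, such that every transversal $(a_1,\dots,a_k)$ with $a_i\in A_i$ is monotone in $y$ --- say increasing, the decreasing case being symmetric. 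It then follows directly from the definition of block-increasing that for $i<j$ every point of $A_i$ has strictly smaller $y$-coordinate than every point of $A_j$, so the $A_i$ are separated in $y$ as well and form an honest monotone staircase of axis-parallel boxes whose order type is realised by every transversal. Run the same procedure on $P_2$ to obtain $B_1,\dots,B_k\subseteq P_2$. It remains to put the two staircases into the orientation demanded by Aronov et al.'s lemma --- by a reflection of $P$ and a suitable choice of the halving line, just as in their proof --- after which, for every transversal $A=\{a_1,\dots,a_k\}$ of the $A_i$ and $B=\{b_1,\dots,b_k\}$ of the $B_i$, the point sequences $A$ and $B$ lie on opposite sides of $\ell$ with those orientations, hence are mutually avoiding. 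As all $2k$ blocks have size $\Omega(|P|/k^2)$, we may take $\epsilon_k=\Omega(1/k^2)$; ``sufficiently large'' guarantees both that Theorem~\ref{main} applies in each half and that the blocks are non-empty.

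The one genuinely nontrivial ingredient is the geometric alignment step --- arranging the two block-staircases so that Aronov et al.'s orientation hypothesis is met. A convenient target, reachable by an affine change of coordinates once the halving line has been chosen well, is that every $A$-transversal becomes an ascending sequence inside the open second quadrant $\{x<0,\,y>0\}$ and every $B$-transversal an ascending sequence inside the open fourth quadrant $\{x>0,\,y<0\}$. In that configuration a line through two points $a_i,a_j$ of an $A$-transversal has positive slope, and since both points lie in $\{x<0,\,y>0\}$ it satisfies $y>0$ for all $x\ge 0$, so it misses the convex hull of any $B$-transversal (which lies in $\{x>0,\,y<0\}$); symmetrically a line through two points of a $B$-transversal satisfies $y<0$ for all $x\le 0$ and misses the convex hull of any $A$-transversal --- so every pair $A,B$ is mutually avoiding. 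I expect this alignment to be the main obstacle: it is where the general-position hypothesis and the freedom in choosing the halving line are used (a naive vertical split can fail, for instance when $P$ is a slight perturbation of a segment), and where one must invoke Aronov et al.'s argument rather than re-derive it. By contrast, replacing Erd\H os--Szekeres by Theorem~\ref{main} is essentially free: in place of one monotone subsequence per side it hands us a whole family of $\Omega(|P|/k^2)$-sized blocks all of whose transversals share a single monotone order type, so whatever one proves about one monotone sequence is proved about all of them at once.
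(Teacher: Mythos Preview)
Your proposal has a genuine gap at the alignment step, and it stems from a mischaracterisation of Aronov et al.'s argument. Their proof does \emph{not} proceed by splitting $P$ with a single line, reading off $y$-coordinates in $x$-order on each side, and then aligning two monotone staircases by a reflection; there is no black-box lemma of theirs that takes two $(x,y)$-monotone staircases separated by a line and places them in mutually avoiding position. Your target configuration (ascending staircases in diagonally opposite open quadrants) does certify mutual avoidance, but you give no mechanism for reaching it, and in general none exists along the lines you sketch: for every choice of the halving line the two block-monotone families extracted by $(x,y)$-order may interleave arbitrarily in $y$, so no affine map can send them to opposite quadrants. The obstacle you flag (``a naive vertical split can fail'') is not a corner case to be patched by choosing $\ell$ well---it is the generic situation.

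The paper follows the actual Aronov et al.\ scheme, which is structurally different. One first takes a halving line $L$, then uses Lemma~\ref{avoiding_lemma} (their Lemma~1) to produce a second line $H$, and sweeps a third line $N$ parallel to $H$; this carves out three regions, each holding at least $\lfloor n/6\rfloor$ points: a strip $Q$ on one side of $L$ and two flanking sets $Q_l,Q_r$ on the other. After normalising so that $L$ is horizontal and $H$ vertical, Theorem~\ref{main} is applied to $Q$ in $x$-order at depth $2k+1$ (not $k$), and a pivot $q$ is chosen from the middle block $Q_{k+1}$. The second, decisive application of Theorem~\ref{main} is to $Q_l$ ordered by \emph{polar angle} about $q$, with \emph{distance to $q$} playing the role of the value---it is this $(\theta,\rho)$ block-monotonicity, not $(x,y)$-monotonicity, that forces every transversal pair to be mutually avoiding. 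The surplus depth $2k+1$ then lets one set $B_i=Q_i$ or $B_i=Q_{k+1+i}$ according to whether the $A_i$ are block-decreasing or block-increasing in distance. So you are right that substituting Theorem~\ref{main} for Erd\H os--Szekeres is essentially free; the point is that it must be substituted into the polar-coordinate argument, not into the Cartesian one you sketched.
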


\noindent This improves an earlier result of Mirzaei and the first author \cite{mirzaei2020positive}, who proved the theorem above with $\epsilon_k=\Omega(\frac{1}{k^4})$. The result above is asymptotically best possible for both $k$ and $|P|$: Consider a $k\times k$ grid $G$ and replace each point with a cluster of $|P|/k^2$ points placed very close to each other so that the resulting point set $P$ is in general position. If we can find subsets $A_i$'s and $B_i$'s as in Theorem~\ref{avoiding}, but each of size $\epsilon'_k |P|$ with $\epsilon'_k=\omega(\frac{1}{k^2})$, then we can find mutually avoiding subsets in $G$ of size $\omega(k)$, contradicting Valtr's result in \cite{valtr1997mutually}.

Finally, let us remark that a recent result due to Pach, Rubin, and Tardos \cite{pach2021planar} shows that every $n$-element planar point set in general position determines at least $n/e^{O(\sqrt{\log n})}$ pairwise crossing segments.  By using Theorem~\ref{avoiding} instead of Lemma~3.3 from their paper, one can improve the constant hidden in the $O$-notation.

\medskip

\noindent\textbf{Monotone biarc diagrams.} A \emph{proper arc diagram} is a drawing of a graph in the plane, whose vertices are points placed on the $x$-axis, called the \emph{spine}, and each edge is drawn as a half-circle.  A classic result of Bernhard and Kainen \cite{BH} shows that a planar graph admits a \emph{planar} proper arc diagram if and only if it's a subgraph of a planar Hamiltonian graph. A \emph{monotone biarc diagram} is a drawing of a graph in the plane, whose vertices are placed on a spine, and each edge is drawn either as a half-circle or two half-circles centered on the spine, forming a continuous $x$-monotone biarc. See Figure~\ref{fig:biarc_example} for an illustration. In \cite{di2005curve}, Di Giacomo et al. showed that every planar graph can be drawn as a \emph{planar} monotone biarc diagram.

Using the Erd\H os-Szekeres Theorem, Bar-Yehuda and Fogel \cite{yehuda1998partitioning} showed that every graph $G=(V,E)$, with a given order on $V$, has a \emph{double-paged book embedding} with at most $O(\sqrt{E})$ pages. That is, $E$ can be partitioned into $O(\sqrt{|E|})$ parts, s.t. for each part $E_i$, $(V,E_i)$ can be drawn as a planar monotone biarc diagram, and $V$ appears on the spine with the given order. Our next result shows that we can significantly reduce the number of pages (parts), if we allow a small fraction of the pairs of edges to cross on each page.  

\begin{theorem}\label{biarc}
For any $\epsilon>0$ and a graph $G=(V,E)$, where $V$ is an ordered set, $E$ can be partitioned into $O(\epsilon^{-2}\log(\epsilon^{-1})\log(|E|))$ subsets $E_i$ s.t. each $(V,E_i)$ can be drawn as a monotone biarc diagram having no more than $\epsilon|E_i|^2$ crossing edge-pairs, and $V$ appears on the spine with the given order.
\end{theorem}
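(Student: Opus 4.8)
The plan is to reduce the statement to a claim about sequences of real numbers, apply Theorem~\ref{partition}, and then realize each block-monotone piece as a monotone biarc diagram in which (almost) all crossings are forced to be ``local''. Fix the order $v_1<\dots<v_n$ of $V$ and identify each edge $v_iv_j$, $i<j$, with the interval $(i,j)$. List the edges $e_1,\dots,e_m$, where $m=|E|$, in increasing order of left endpoint, breaking ties by right endpoint and then arbitrarily, and let $b_t$ denote the right endpoint of $e_t$. We shall feed the sequence $b_1,\dots,b_m$ to our results on sequences; after an infinitesimal perturbation we may assume the $b_t$ are distinct, which is harmless since two edges sharing an endpoint never cross in an arc diagram.

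The heart of the proof is a realization statement: \emph{if a set $\mathcal F$ of edges, listed as above, has right-endpoint sequence that is block-monotone of depth $d$ and block-size $s$, then $(V,\mathcal F)$ admits a monotone biarc diagram in which at most $d\binom{s}{2}$ pairs of edges cross, each such pair consisting of two edges of a common block.} To prove it, write $\mathcal F=B_1\cup\dots\cup B_d$ for the blocks. Since the edges are listed by left endpoint, for $t<t'$ every $e\in B_t$, $e'\in B_{t'}$ has $a(e)\le a(e')$; together with block-monotonicity of the right endpoints this forces $e$ and $e'$ to be disjoint or interleaving (``staircase'') in the block-increasing case, and nested in the block-decreasing case. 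Collapsing each $B_t$ to the super-edge $\hat e_t$ that spans from $\min\{a(e):e\in B_t\}$ to $\max\{b(e):e\in B_t\}$ produces $d$ super-edges in the very same monotone position, so by the drawing facts underpinning the theorem of Bar-Yehuda and Fogel \cite{yehuda1998partitioning}---a nested family of edges, and a staircase, each admit a crossing-free monotone biarc diagram---we may draw $\hat e_1,\dots,\hat e_d$ without crossings. This crossing-free drawing assigns to each $\hat e_t$ a region $R_t$ of the plane; the $R_t$ have pairwise disjoint interiors, and every edge of $B_t$, having both its endpoints in the spine-interval spanned by $\hat e_t$, can be drawn inside $\overline{R_t}$ (as a single sub-arc, or as a monotone biarc if it straddles the point where $\hat e_t$ meets the spine). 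Hence edges of distinct blocks never cross, and within a block at most $\binom{s}{2}$ pairs cross.

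To conclude, let $\epsilon>0$ and put $k=\lceil 1/(2\epsilon)\rceil$. If $|E|\le(k-1)^2$, place each edge on its own page, using $O(\epsilon^{-2})$ trivially valid pages; otherwise apply Theorem~\ref{partition} to $b_1,\dots,b_m$, obtaining a partition of all but $(k-1)^2$ of the edges into $O(k^2\log k)$ parts, each block-monotone of some depth $d\ge k$ and some block-size $s$. By the realization statement, such a part, of size $ds$, is drawable as a monotone biarc diagram with at most $d\binom{s}{2}<\tfrac{1}{2}ds^2=\tfrac{1}{2d}(ds)^2\le\epsilon(ds)^2$ crossing pairs, hence is an admissible page, and the remaining $(k-1)^2=O(\epsilon^{-2})$ edges go on $O(\epsilon^{-2})$ singleton pages. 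As $k=\Theta(\epsilon^{-1})$, this uses $O(k^2\log k)+O(\epsilon^{-2})=O(\epsilon^{-2}\log(\epsilon^{-1}))$ pages, well within the asserted bound; alternatively, using in place of Theorem~\ref{partition} the decomposition into $O(k\log m)$ block-monotone subsequences obtained by iterating Theorem~\ref{main} yields the bound with the stated $\log|E|$ factor directly. Throughout, $V$ keeps its prescribed order on the spine.

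The main obstacle is the realization statement, and inside it the claim that an \emph{arbitrary} block $B_t$ of $s$ mutually unconstrained edges fits inside the single region $R_t$ prepared for $\hat e_t$ without spilling over and crossing edges of adjacent blocks, all while respecting the rigidity of monotone biarcs (each edge is one half-circle or two consecutive ones, and the curve must stay $x$-monotone). The delicate parts are the interface between consecutive super-edges $\hat e_t$ and $\hat e_{t+1}$, which may interleave along the spine in the block-increasing case, and the bookkeeping for edges sharing an endpoint; producing a clean argument there instead of ad hoc casework is where most of the work lies.
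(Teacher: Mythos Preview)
Your approach differs from the paper's and, if the realization statement were established, would yield the stronger bound $O(\epsilon^{-2}\log(\epsilon^{-1}))$ with no $\log|E|$ factor. But the realization statement is exactly where your argument breaks.

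In the block-decreasing case your observation is fine: inter-block edge pairs are strictly nested, so upper semicircles suffice. The trouble is the block-increasing case. Your sketch draws the super-edges $\hat e_1,\dots,\hat e_d$ crossing-free and then asserts that each block $B_t$ can be confined to a region $R_t$, with the $R_t$ pairwise interior-disjoint. That assertion is false: when $\hat e_t$ and $\hat e_{t+1}$ interleave on the spine, any crossing-free biarc drawing of the two super-edges has one upper arc nested inside the other, and the region above the spine bounded by the inner arc lies \emph{inside} the region bounded by the outer one---so the $R_t$ overlap. Even setting this aside, a monotone biarc is a union of one or two half-circles of prescribed radii; an edge of $B_t$ cannot be freely routed inside a curved region $R_t$, so ``draw it inside $\overline{R_t}$'' is not a legal move. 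You yourself flag this interface as the main obstacle, and indeed it is: as written there is no proof that edges from different blocks can be made non-crossing.

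The paper circumvents the whole issue by inserting one extra idea before touching the sequence of right endpoints. It first applies a bisection lemma to find a vertex $b$ that splits $V$ so that each side carries at most half the edges; only the edges that \emph{straddle} $b$ are then ordered lexicographically and fed to the block-monotone partition theorem. Because every such edge has left endpoint $\le b$ and right endpoint $>b$, in the block-nondecreasing case one can draw each edge $(\ell,r)$ as a biarc with spine-crossing at the explicit point $b+1-\ell/n-r/(2n^2)$; a two-line check shows inter-block pairs become nested both above and below the spine, hence non-crossing. The recursion on the two halves of $V$ then accounts for the $\log|E|$ factor. In short, the paper's extra ingredient is a global pivot $b$ that rigidifies the biarc drawing, doing for free what your local super-edge/region argument tries (and fails) to do. If you want to rescue your route you would need a direct proof---not via regions---that any family of edges ordered by left endpoint with block-increasing right endpoints admits a monotone biarc drawing with no inter-block crossings; this may well be true, but it is not what you have written.
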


This paper is organized as follows: In Section~\ref{sec:main_proof}, we prove Theorem~\ref{main} in the setting of monotone paths in multicolored ordered graphs. Section~\ref{sec:partition} is devoted to the proof of Theorem~\ref{partition}. In Section~\ref{sec:applications}, we present proofs for the applications claimed above. Section~\ref{sec:remarks} lists some remarks.

\section{A positive fraction result for monotone paths}\label{sec:main_proof}

Several authors \cite{FPSS,MS,MSW} observed that the Erd\H os-Szekeres theorem generalizes to the following graph-theoretic setting. Let $G$ be a graph with vertex set $[n] = \{1,\ldots ,n\}$. A \emph{monotone path of length} $k$ in $G$ is a $k$-tuple $(v_1,\dots, v_k)$ of vertices s.t. $v_i<v_j$ for all $i<j$ and all edges $v_iv_{i + 1}$, for $i \in [k-1]$, are in $G$.

\begin{theorem}\label{espath} Let $\chi$ be a $q$-coloring of the pairs of $[n]$.  Then there must be a monochromatic monotone path of length at least $n^{1/q}$. 
\end{theorem}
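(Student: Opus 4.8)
The plan is to prove Theorem~\ref{espath} by a standard labeling (or "potential function") argument, generalizing the classical pigeonhole proof of the Erd\H os-Szekeres theorem. For each vertex $v \in [n]$ and each color $c \in [q]$, I would define $\ell_c(v)$ to be the length of the longest monochromatic monotone path in color $c$ that ends at $v$ (with the convention that a single vertex is a path of length $1$, so $\ell_c(v) \geq 1$ always). This assigns to every vertex $v$ a $q$-tuple of labels $L(v) = (\ell_1(v), \ell_2(v), \dots, \ell_q(v)) \in \mathbb{Z}_{\geq 1}^q$.

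The key step is to show that the labeling $L$ is injective. Suppose $u < v$ are two vertices, and let $c = \chi(\{u,v\})$ be the color of the pair $uv$. Any longest monochromatic monotone path in color $c$ ending at $u$ can be extended by the edge $uv$ (which has color $c$) to a monochromatic monotone path in color $c$ ending at $v$; hence $\ell_c(v) \geq \ell_c(u) + 1 > \ell_c(u)$. Therefore $L(u) \neq L(v)$, and the map $v \mapsto L(v)$ is injective on $[n]$.

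Now I would finish by a counting argument. Suppose, for contradiction, that there is no monochromatic monotone path of length at least $n^{1/q}$; that is, $\ell_c(v) \leq \lceil n^{1/q}\rceil - 1 < n^{1/q}$ for every $v$ and every $c$ — more carefully, every label lies in $\{1, 2, \dots, m\}$ where $m = \lceil n^{1/q} \rceil - 1$. Then the number of possible label tuples is at most $m^q < n$ (one has to be slightly careful with the ceiling/floor, but the point is that if all monochromatic monotone paths have length $< n^{1/q}$ then each coordinate takes fewer than $n^{1/q}$ values, so there are fewer than $n$ tuples in total). This contradicts injectivity, since there are $n$ vertices. Hence some monochromatic monotone path has length at least $n^{1/q}$.

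I do not expect any serious obstacle here — this is a clean pigeonhole argument and the only mild care needed is in handling the non-integrality of $n^{1/q}$ when translating "no path of length $\geq n^{1/q}$" into a bound on the number of distinct labels. One could sidestep this entirely by stating the contrapositive: if every monochromatic monotone path has length at most $t$, then $n \leq t^q$, i.e. $t \geq n^{1/q}$.
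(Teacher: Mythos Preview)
Your argument is correct and is exactly the standard labeling/pigeonhole proof of this result. Note, however, that the paper does not actually supply its own proof of Theorem~\ref{espath}: it is quoted as a known observation with references to \cite{FPSS,MS,MSW}, and then used as a black box in Lemma~\ref{2path}. So there is nothing to compare against directly.

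That said, your approach is very much in the spirit of the paper: the proof of Theorem~\ref{path} uses precisely the same labeling idea one level up, assigning to each vertex $v$ the tuple $f(v)=(b_1,\dots,b_q)$ recording the depth of the longest block-monotone path of each color ending at $v$, and then applying pigeonhole. Your contrapositive formulation (``if every monochromatic monotone path has length at most $t$ then $n\le t^q$'') is the cleanest way to avoid fussing with $\lceil n^{1/q}\rceil$, and is exactly what is needed here.
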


Given subsets $A,B\subset [n]$, we write $A < B$ if every element in $A$ is less than every element in $B$.   

\begin{definition}
Let $G$ be a graph with vertex set $[n]$ and let $V_1,\ldots, V_k \subset [n]$ and $p_1,\ldots, p_{k + 1} \in [n]$. Then we say that $(p_1,V_1,p_2,V_2,p_3,\ldots,p_{k}, V_k,p_{k + 1})$ is a \emph{block-monotone path of depth $k$ and block-size $s$} if 
\begin{enumerate}
    \item  $|V_i| = s$ for all $i$, 
    \item  we have $p_1 < V_1 < p_2 < V_2 < p_3 < \ldots < p_k < V_k < p_{k + 1},$
    \item and every $(2k + 1)$-tuple of the form
        \[(p_1,v_1,p_2,v_2,\ldots, p_k,v_k,p_{k + 1}),\]
    where $v_i \in V_i$, is a monotone path in $G$.
\end{enumerate}
\end{definition}

\noindent Our main result in this section is the following Ramsey-type theorem.

\begin{theorem}\label{path}
There is an absolute constant $c>0$ s.t. the following holds. Given integers $q\geq 2$, $k \geq 1$, and $n \geq (ck)^q$, let $\chi$ be a $q$-coloring of the pairs of $[n]$.  Then $\chi$ produces a monochromatic block-monotone path of depth $k$ and block-size $s \geq \frac{n}{(ck)^q}$.
  
\end{theorem}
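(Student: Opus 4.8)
The plan is to mimic the classical pigeonhole/weight-function proof of the Erd\H os--Szekeres theorem in its monotone-path formulation (Theorem \ref{espath}), but to track, for each vertex, not just the length of the longest monochromatic monotone path ending there, but a \emph{refined statistic} that forces many internally disjoint extensions of the same color. Concretely, for a fixed color $i \in [q]$ and each vertex $v \in [n]$, let $f_i(v)$ denote the largest $k$ such that there is a monochromatic-in-color-$i$ block-monotone path of depth $k$ and block-size $s$ ending at $v$ (more precisely, ending with a ``cap'' vertex at $v$). The key point is to choose $s$ and run an inductive counting argument so that the vector $(f_1(v),\dots,f_q(v))$ can take only boundedly many values: if $n \ge (ck)^q$ and every $f_i(v) < k$, then each coordinate lies in $\{0,1,\dots,k-1\}$, so there are at most $k^q \le n/(c^q)$ possible vectors, and some value class is large. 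We then argue that a large value class, together with the fact that all pairs inside it are colored, must itself contain a long monochromatic monotone sub-path, producing the next block and contradicting maximality of the $f_i$'s. The constant $c$ is chosen to absorb the loss incurred each time we pass from a ``large set of vertices with a common statistic'' to ``a long monochromatic monotone path inside that set'' via Theorem \ref{espath}.

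First I would set up the recursion cleanly. Write $N_0 = n$ and, having found a block $V_j$ of size $s$ together with its flanking pivots, restrict attention to the interval of $[n]$ lying strictly between the last pivot and some upper bound; the core estimate is that from a linearly sized set one can extract one pivot, then a monochromatic monotone path of length $\ge |{\rm set}|^{1/q}$ (Theorem \ref{espath}), designate its first $s$ interior vertices as the next block $V_{j+1}$ and one of its endpoints as the next pivot, and still be left with a set whose size has dropped only by a controlled factor. Iterating this $k$ times costs a factor of roughly $(ck)^q$ in total provided $s \le n/(ck)^q$, which is exactly the claimed block-size. The bookkeeping that needs care is ensuring the \emph{same} color is used for all $k$ blocks: this is where the $k^q$-many-possible-statistics pigeonhole, rather than a naive $k$-fold iteration of Theorem \ref{espath}, is essential — a direct iteration would lose a factor of $q$ in the exponent each step.

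The main obstacle, and the part I expect to require the most care, is making the ``block-monotone path'' structure — with its alternating pivots $p_1 < V_1 < p_2 < \cdots$ and the requirement that \emph{every} transversal $(p_1,v_1,\dots,v_k,p_{k+1})$ be a monochromatic monotone path — compatible with the pigeonhole statistic. The subtlety is that $f_i(v)$ must be defined so that (a) it is monotone along edges of color $i$ in the right sense, so that two vertices $u<v$ with $f_i(u)=f_i(v)$ and $\chi(uv)=i$ would let us \emph{extend} a depth-$f_i$ path through both, and (b) the ``$|V_i|=s$'' exact-size and ``appearing sequentially'' conditions are preserved under the extension. I would handle this by proving a lemma of the form: if $W \subseteq [n]$ has $|W| \ge (ck')^q \cdot s$ and all pairs in $W$ are $q$-colored, then $W$ contains a monochromatic block-monotone path of depth $k'$ and block-size $s$ whose pivots and blocks all lie in $W$; the theorem is the case $W = [n]$, $k'=k$. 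The induction on $k'$ is where pivots get inserted: from the length-$(\ge |W|^{1/q})$ monochromatic monotone path guaranteed by Theorem \ref{espath}, peel off one vertex as a new terminal pivot $p_{k'+1}$ and $s$ vertices as $V_{k'}$, then apply the inductive hypothesis to the portion of $W$ below the remaining pivot $p_{k'}$. Checking that the size of that lower portion is still $\ge (ck'-1)^q s$ — i.e. that peeling off $O(s)$ vertices and one monotone path does not cost more than a factor absorbed by the constant — is the routine but delicate calculation I would defer to the write-up.
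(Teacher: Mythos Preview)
Your opening paragraph has the right skeleton, and it matches the paper's approach: define $f(v)=(f_1(v),\dots,f_q(v))$ where $f_i(v)$ is the largest depth of a color-$i$ block-monotone path of block-size $s$ ending at $v$, observe that each coordinate is in $\{0,\dots,k-1\}$ under the contradiction hypothesis, and pigeonhole to get a value class $V$ of size $\ge n/k^q$.

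The gap is in what you do \emph{inside} $V$. You write that $V$ ``must itself contain a long monochromatic monotone sub-path, producing the next block.'' But a monochromatic monotone path $w_1<w_2<\cdots<w_m$ in $V$ does \emph{not} produce a block: in such a path only consecutive pairs $w_jw_{j+1}$ are guaranteed to have the right color, whereas extending a block-monotone path from $p_1$ to $p_2$ requires a set $V'$ of $s$ vertices strictly between $p_1$ and $p_2$ with \emph{every} edge $p_1v$ and $vp_2$ (for $v\in V'$) in the same color. Equivalently, you need many monochromatic monotone paths of length~$3$ sharing both endpoints, not one long monotone path. Your third paragraph repeats this conflation: ``peel off one vertex as a new terminal pivot $p_{k'+1}$ and $s$ vertices as $V_{k'}$'' from a monotone path does not give the required complete bipartite structure between $\{p_{k'},p_{k'+1}\}$ and $V_{k'}$.

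The paper fills this gap with a separate lemma (Lemma~\ref{2path}): in any $q$-coloring of the pairs of $[N]$ there exist $p_1<p_2$ and a color $\alpha$ with at least $N/(q3^{3q})$ vertices $v\in(p_1,p_2)$ such that both $p_1v$ and $vp_2$ have color $\alpha$. The proof is a double-counting of monochromatic length-$3$ paths (via Theorem~\ref{espath} applied to every $3^q$-subset) followed by averaging over the endpoint pair. Applying this lemma to the value class $V$ yields $(p_1,V',p_2)$ with $|V'|\ge |V|/(q3^{3q})\ge s$ for suitable $c$, and now $f_\alpha(p_1)=f_\alpha(p_2)$ is contradicted exactly as you intended. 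Once you replace ``long monochromatic monotone sub-path'' by this length-$3$-path averaging, your first-paragraph outline is complete; the recursive scheme of paragraphs two and three is then unnecessary.
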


\noindent A careful calculation shows that we can take $c = 40$ in the theorem above.  We will need the following lemma.

\begin{lemma}\label{2path}
Let $q\geq 2$ and $N >  3^{q}$.  Then for any $q$-coloring of the pairs of $[N]$, there is a monochromatic block-monotone path of depth $1$ and block-size $s\geq \frac{N}{q3^{3q}}$.
\end{lemma}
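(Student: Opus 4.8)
The plan is to reformulate the statement combinatorially and then attack it with a reduction of scale followed by an iterated pigeonhole. First observe that a monochromatic block-monotone path of depth $1$ and block-size $s$ in color $i$ is exactly a triple $(p_1,V_1,p_2)$ with $p_1<V_1<p_2$, $|V_1|=s$, and $\chi(p_1,v)=\chi(v,p_2)=i$ for every $v\in V_1$; writing $R_i(p)$ (resp.\ $L_i(p)$) for the color-$i$ neighbors of $p$ lying to its right (resp.\ left), this is the same as finding a color $i$ and a pair $p_1<p_2$ with $|R_i(p_1)\cap L_i(p_2)|\geq s$. So the goal is a ``heavy monochromatic cherry-book'': two vertices sharing many in-between neighbors in a single color, and the numerical target is $s\geq N/(q3^{3q})$.

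The second step reduces the scale so that Theorem~\ref{espath} becomes usable. Partition $[N]$ into $3^q$ consecutive intervals $J_1<\dots<J_{3^q}$ of nearly equal size $\approx N/3^q$ (this is exactly where the hypothesis $N>3^q$ is used). Pass to the quotient $q$-coloring $\chi'$ on $[3^q]$, coloring $\{a,b\}$ with the most popular color among the pairs in $J_a\times J_b$; since $(3^q)^{1/q}=3$, Theorem~\ref{espath} supplies a $\chi'$-monochromatic monotone path of length $3$, i.e.\ three intervals $J_a<J_b<J_c$ and a color $i$ dominating both the $J_a$–$J_b$ pairs and the $J_b$–$J_c$ pairs — precisely the three ``layers'' needed for depth $1$. (A cleaner substitute for this extraction is a direct pigeonhole on vertex profiles: attach to $v$ the pair $\bigl(S^-(v),S^+(v)\bigr)$ of colors occurring on its edges to the left and to the right; there are at most $3^q$ such profiles, so once $N>3^q$ two vertices repeat a profile and one reads off an unavoidable monochromatic cherry that can then be bootstrapped.)

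From the chain $J_a<J_b<J_c$ and the color $i$ one would blow up to a linear-size block path by an iterated pigeonhole inside $J_b$: average over $J_b$ and $J_a$ to fix a single $p_1\in J_a$ together with a set $X\subseteq J_b$ of size $\Omega(|J_b|/q)$ with $\chi(p_1,v)=i$ on $X$; then average again over $X$ and $J_c$ to fix a single $p_2\in J_c$ and a set $V_1\subseteq X$ of size $\Omega(|X|/q)$ with $\chi(v,p_2)=i$ on $V_1$; tracking the losses (three pigeonholes by $q$ from the quotient/profile step and two more from the blow-up, against blocks of size $N/3^q$) leaves a block path of size $\Omega\!\bigl(N/(q^{O(1)}3^q)\bigr)$, comfortably above $N/(q3^{3q})$.

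I expect the main obstacle to be the last step, and it is also where a naive execution breaks: dominance of color $i$ between $J_a$ and $J_b$ and between $J_b$ and $J_c$ does \emph{not} by itself guarantee that this dominance is carried by a \emph{common} set of vertices of $J_b$ — the color-$i$ neighborhood of the left endpoint and that of the right endpoint inside $J_b$ could be essentially disjoint, so chaining ``majority colors'' alone produces nothing. The fix is to commit, via the averaging, to the left endpoint $p_1$ (hence to the subset $X\subseteq J_b$) \emph{before} searching for $p_2$ inside $X$, which in turn forces the quotient/profile extraction to be done at a granularity fine enough that the left- and right-side structures inside the middle piece genuinely overlap; this is why one needs blocks polynomially larger in $q$ than the target size, so that the $\Omega(1/q)$ factor lost at each commitment is affordable. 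Making this bookkeeping airtight — choosing the right number of intervals and ordering the pigeonholes so the commitments are consistent — is the technical heart of the argument.
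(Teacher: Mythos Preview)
Your proposal contains a genuine gap, and it is precisely the one you flag yourself but do not actually repair. After fixing $p_1\in J_a$ and its set $X\subseteq J_b$ of color-$i$ right-neighbors, the second averaging ``over $X$ and $J_c$'' presupposes that a positive fraction of the $X\times J_c$ edges carry color $i$. Nothing in your setup guarantees this: the majority information you extracted concerns $J_b\times J_c$, not $X\times J_c$, and it is easy to arrange a coloring in which color $i$ dominates both $J_a\times J_b$ and $J_b\times J_c$ while the color-$i$ left-neighborhoods and color-$i$ right-neighborhoods inside $J_b$ are \emph{disjoint} (put all color-$i$ edges from $J_a$ into the left half of $J_b$ and all color-$i$ edges to $J_c$ out of the right half). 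In that case there is not a single color-$i$ cherry through $J_b$, so no choice of $p_2$ works. Committing to $p_1$ first changes the order of the choices but supplies no new information about $X\times J_c$; refining the interval granularity does not help either, because the obstruction is local to whatever three intervals you end up with. The parenthetical ``profile'' alternative is also not well-formed: if $S^{\pm}(v)$ are sets of colors there are $4^q$ profiles, not $3^q$, and a repeated profile only gives a common color on each side separately, not a single color spanning both legs of the cherry.

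The paper avoids this alignment problem altogether by a global double count rather than a quotient-then-blow-up scheme. Since every $3^q$-subset of $[N]$ contains a monochromatic monotone $3$-path by Theorem~\ref{espath}, the total number of monochromatic monotone $3$-paths in $[N]$ is at least $\binom{N}{3^q}\big/\binom{N-3}{3^q-3}\geq \frac{6}{3^{3q}}\binom{N}{3}$. Pigeonholing first over the $q$ colors and then over the at most $\binom{N}{2}$ endpoint pairs yields some $p_1<p_2$ and a color $\alpha$ with at least $N/(q\,3^{3q})$ middle vertices $v$ such that $(p_1,v,p_2)$ is monochromatic in $\alpha$; those middle vertices form $V_1$. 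The point is that this argument counts monochromatic cherries directly and lets averaging over endpoints do the work, so the left and right legs are coherent by construction and no ``majorities must overlap'' step is ever needed.
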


\begin{proof}
Let $\chi$ be a $q$-coloring of the pairs of $[N]$, and set $r = 3^q$.  By Theorem \ref{espath}, every subset of size $r$ of $[N]$ gives rise to a monochromatic monotone path of length 3.  Hence, $\chi$ produces at least \[\frac{\binom{N}{r}}{\binom{N-3}{r-3}} \geq \frac{6}{r^3}\binom{N}{3}\] monochromatic monotone paths of length $3$ in $[N]$.  Hence, there are at least $\frac{6}{qr^3}\binom{N}{3}$ monochromatic monotone paths of length 3, all of which have the same color. By averaging, there are two vertices $p_1,p_2 \in [N]$, s.t. at least $\frac{N}{qr^3}$ of these monochromatic monotone paths of length 3 start at vertex $p_1$ and ends at vertex $p_2$.  By setting $V_1$ to be the ``middle'' vertices of these paths, $(p_1,V_1,p_2)$ is a monochromatic block-monotone path of depth $1$ and block-size $s\geq \frac{N}{qr^3} = \frac{N}{q3^{3q}}$.\end{proof}

\begin{proof}[Proof of Theorem \ref{path}]

Let $\chi$ be a $q$-coloring of the pairs of $[n]$ and let $c$ be a sufficiently large constant that will be determined later.  Set $s = \lceil\frac{n}{(ck)^q}\rceil$.  For the sake of contradiction, suppose $\chi$ does not produce a monochromatic block-monotone path of depth $k$ and block-size $s$.  For each element $v \in [n]$, we label $v$ with $f(v) = (b_1,\ldots, b_q)$, where $b_i$ denotes the depth of the longest block-monotone path with block-size $s$ in color $i$, ending at $v$.  By our assumption, we have $0\leq b_i \leq k-1$, which implies that there are at most $k^q$ distinct labels.  By the pigeonhole principle, there is a subset $V\subset [n]$ of size at least $n/k^q$, s.t. the elements of $V$ all have the same label.  

By Lemma \ref{2path}, there are vertices $p_1,p_2 \in V$, a subset $V'\subset V$, and a color $\alpha$ s.t. $(p_1,V',p_2)$ is a monochromatic block-monotone path in color $\alpha$, with block-size $t \geq  \frac{|V|}{q3^{3q}}.$  By setting $c$ to be sufficiently large, we have
    \[t \geq \frac{|V|}{q3^{3q}} \geq \frac{n}{k^qq3^{3q}} \geq \left\lceil\frac{n}{(ck)^q}\right\rceil =  s.\]
However, this contradicts the fact that $f(p_1) = f(p_2)$, since the longest supported monotone path with block-size $s$ in color $\alpha$ ending at vertex $p_1$ can be extended to a longer one ending at $p_2$.  This completes the proof.  \end{proof}

\begin{proof}[Proof of Theorem \ref{main}]
Let $A=(a_1,\dots,a_n)$ be a sequence of distinct real numbers. Let $\chi$ be a red/blue coloring of the pairs of $A$ s.t. for $i<j$, we have $\chi(a_i,a_j) = $ red if $a_i < a_j$ and $\chi(a_i,a_j) = $ blue if $a_i > a_j$. In other words, we color the increasing pairs by red and the decreasing pairs by blue.

If $n< (ck)^2$, notice that $n/(ck)^2<1$. By our assumption $n>(k-1)^2$, the classical Erd{\H o}s-Szekeres theorem gives us a monotone subsequence in $A$ of length at least $k$, which can be regarded as a block-monotone subsequence of depth at least $k$ and block-size $s=1>n/(ck)^2$.

If $n\geq (ck)^2$, by Theorem~\ref{path}, there is a monochromatic block-monotone path of depth $k$ and block-size $s \geq n/(ck)^2$ in the complete graph on $A$, which can be regarded as a block-monotone subsequence of $A$ with the claimed depth and block-size.

Now we focus on computing such a block-monotone subsequence. If $n< (ck)^2$, it suffices to compute the longest monotone subsequence of $A$. It's well-known that the longest increasing subsequence can be computed within $O(n\log n)$ time, see \cite{fredman1975computing}, so we are done with this case.

If $n\geq (ck)^2$, we set $s=\lceil n/(ck)^2 \rceil$. We call a pair $(a_i,a_j)$ \emph{$s$-gapped} if there exist $s$ other entries $a_x$ with $i<x<j$ satisfying $a_i<a_x<a_j$ or $a_i>a_x>a_j$.
We describe an $O(n^2\log n)$-time algorithm that computes the longest increasing subsequence with consecutive entries $s$-gapped.

Firstly, we preprocess $A$ into a data structure s.t. we can answer within $O(\log n)$ time whether any given pair $(a_i,a_j)$ is $s$-gapped or not. The classical data structure for 2-D orthogonal range counting works for our purpose and its preprocessing time is $O(n\log n)$, see Exercise~5.10 in \cite{deberg2008computational}.

Next, let $l(i)$ be the length of the longest increasing subsequence of $A$ with consecutive entries $s$-gapped ending at $a_i$. We compute each $l(i)$ as $i$ proceeds from $1$ to $n$ as follows: After $l(1),\dots, l(i-1)$ are all determined, we have
    \[l(i)=\max_{j<i}\{l(j);(a_j,a_i)\text{ is $s$-gapped}\}+1.\]
Here, we consider $\max(\emptyset):=0$. Hence we can compute $l(i)$ by checking which pairs in $\{(a_j,a_i);j<i\}$ are $s$-gapped using our preprocessed data structure. Clearly, this computation of all $l(i)$ takes $O(n^2\log n)$ time.

While computing $l(i)$, let the algorithm record $p(i)$, which is the index $j<i$ with the largest $l(j)$ s.t. $(a_j,a_i)$ is $s$-gapped. This recording process won't increase the magnitude of time complexity. After all $l(i)$ and $p(i)$ are determined, we find the index $i_1$ with the largest $l(i)=:L$, and inductively set $i_{j+1}=p(i_j)$ for $j\in [L-1]$. Then $a_{i_L},a_{i_{L-1}},\ldots,a_{i_1}$ is the longest increasing subsequence of $A$ with consecutive entries $s$-gapped.

Let's return to computing the block-monotone subsequence. By the previous argument on block-monotone paths, there exists a monotone subsequence $S\subset A$ with consecutive entries $s$-gapped whose length is at least $k+1$. We can use the algorithm above to compute $S$ within $O(n^2\log n)$ time. Clearly, the entries of $A$ ``gapped'' by consecutive entries of $S$ form a block-monotone subsequence as claimed, and they can be found within $O(n)$ time. Hence we conclude the theorem.
\end{proof}

\begin{remark}\label{extreme_construction_remark}
For each $k,q,s>0$, the simple construction below shows Theorem~\ref{path} is tight up to the constant factor $c^q$. We first construct $K(k,q)$, for each $k$ and $q$, a $q$-colored complete graph on $[k^q]$, whose longest monochromatic monotone path has length $k$: $K(k,1)$ is just a monochromatic copy of the complete graph on $[k]$. To construct $K(k,q)$ from $K(k,q-1)$, take $k$ copies of $K(k,q-1)$ with the same set of $q-1$ colors, place them in order and color the remaining edges by a new color. Now replace each point in $K(k,q)$ by a cluster of $s$ points, where within each cluster one can arbitrarily color the edges. The resulting $q$-colored complete graph has no $k$ subsets $V_1,V_2,\dots,V_k\subset [n]$ each of size $s+1$ and edges between them monochromatic, otherwise $K(k,q)$ would have a monochromatic monotone path with length larger than $k$.

One example of the sharpness of the classical Erd{\H o}s-Szekeres theorem is the sequence
    \[S(k)=(k,k-1,\dots,1,2k,2k-1,\dots,k+1,\ldots,k^2,k^2-1,k(k-1)+1).\]
Notice that if we color the increasing pairs of $S(k)$ by red and the decreasing pairs of $S(k)$ by blue, we obtain the graph $K(k,2)$. If we replace each entry $s_i\in S(k)$ by a cluster of $s$ distinct real numbers very close to $s_i$, we obtain an example showing that Theorem~\ref{main} is asymptotically best possible.
\end{remark}

\section{Block-monotone sequence partition}\label{sec:partition}

This section is devoted to the proof of Theorem \ref{partition}. We shall consider this problem geometrically by identifying each entry $a_i$ of a given sequence $A=(a_i)_{i=1}^n$ as a planar point $(i,a_i)\in \mathbb{R}^2$. As we consider sequences of distinct real numbers, throughout this section, we assume that all point sets have the property that no two members share the same $x$-coordinate or the same $y$-coordinate.

Thus, we analogously define block-monotone point sets as follows: A set of $ks$ planar points is said to be \emph{block-increasing} (resp. \emph{block-decreasing}) with \emph{depth} $k$ and \emph{block-size} $s$ if it can be written as $\{(x_i,y_i)\}_{i=1}^{ks}$ s.t. $x_i<x_{i+1}$ for all $i$ and every sequence $(y_{i_1},y_{i_2},\dots, y_{i_k})$, for $(j-1)s<i_j\leq js$, is increasing (resp. decreasing). We say that a point set is \emph{block-monotone} if it's either block-increasing or block-decreasing. For each $j\in [k]$ we call the subset $\{(x_i,y_i)\}_{i=(j-1)s+1}^{js}$ the \emph{$j$-th block} of this block-monotone point set.

Hence, Theorem~\ref{partition} immediately follows from the following.
\begin{theorem}\label{partition_pointset}
For any positive integer $k$, every finite planar point set can be partitioned into at most $O(k^2\log k)$ block-monotone point subsets of depth at least $k$ and a remaining set of size at most $(k-1)^2$.
\end{theorem}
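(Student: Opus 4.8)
The starting point is the positive fraction theorem itself. Its point set form gives, in any $R$ with $|R|>(k-1)^2$, a block-monotone subset of depth $k$ and size $\Omega(|R|/k)$, so peeling such subsets off greedily partitions $P$ into $O(k\log|P|)$ block-monotone pieces of depth at least $k$ together with a remainder of at most $(k-1)^2$ points. This is already $O(k^2\log k)$ whenever $|P|\le 2^{O(k\log k)}$, so the whole difficulty is to eliminate the factor $\log|P|$ for very large point sets; I would do this by a divide-and-conquer recursion on the depth $k$ rather than by peeling one piece at a time.

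Two observations serve as base cases. First, a point set of any size is block-monotone of depth $1$, so ``depth at least $1$'' costs one piece and no deletions. Second — a positive fraction analogue of the extremal Erd\H os--Szekeres configuration — if $\operatorname{LIS}(P)\le k-2$ then the patience-sorting levels $L_1,\dots,L_{\operatorname{LIS}(P)}$ partition $P$ into that many decreasing subsequences; those of length at least $k$ are legitimate block-monotone pieces, while the union of the shorter ones has fewer than $\operatorname{LIS}(P)(k-1)\le (k-2)(k-1)<(k-1)^2$ points and can be deleted, and symmetrically for $\operatorname{LDS}$. For the recursive step I would cut $P$ by a vertical line (through the $x$-median) and a horizontal line into quadrants $Q_{\mathrm{bl}},Q_{\mathrm{tl}},Q_{\mathrm{tr}},Q_{\mathrm{br}}$, recurse inside the quadrants with target depth $\lceil k/2\rceil$, and glue pieces along the two diagonals: since $Q_{\mathrm{bl}}$ lies below and to the left of $Q_{\mathrm{tr}}$, a block-increasing piece of depth $\lceil k/2\rceil$ in $Q_{\mathrm{bl}}$ concatenated with one of equal block-size in $Q_{\mathrm{tr}}$ is a block-increasing piece of depth at least $k$, and dually $Q_{\mathrm{tl}}$ lies above and to the left of $Q_{\mathrm{br}}$, so block-decreasing pieces glue across that diagonal. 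The target depth halves at each level, so the recursion has depth $O(\log k)$ and produces $k^{O(1)}$ pieces; keeping the bookkeeping tight should give $O(k^2)$ pieces per direction and $O(k^2\log k)$ overall, with the sets deleted along the $O(\log k)$ levels staying below $(k-1)^2$.

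I expect two genuine obstacles. The first — and the reason the greedy bound is hard to beat — is that a recursion of the naive shape merely rebuilds the greedy ``staircase'' one piece at a time: to actually gain, the recursive calls on $Q_{\mathrm{bl}}$ and $Q_{\mathrm{tr}}$ must each return many matched pieces covering a constant fraction of their quadrant, which forces the off-diagonal quadrants $Q_{\mathrm{tl}},Q_{\mathrm{br}}$ to be fed back into the recursion in a coordinated way, and requires one to maintain a genuine progress invariant (for instance that $\operatorname{LIS}$ and $\operatorname{LDS}$ are eventually driven below $k-1$, after which the base case finishes the job) rather than a mere multiplicative decrease in $|P|$. The second is purely combinatorial: the definition forces all blocks of a single piece to have exactly the same size, so a concatenation across a diagonal is legal only when the two sides can be matched with identical block-sizes; making the recursion oblivious enough for this, or normalising block-sizes by discarding a controlled number of extra points, is where most of the care goes. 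Once the combinatorics is arranged, verifying that each constructed piece is genuinely block-monotone of the claimed depth, and that each step (patience sorting, median cuts, concatenations) runs in time polynomial in $k$ and $|P|$, should be routine, in the spirit of the algorithmic part of Theorem~\ref{main}.
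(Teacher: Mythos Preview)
Your proposal is an outline rather than a proof, and the two obstacles you flag are precisely where it breaks; neither is resolved, and I do not see how to close them along the lines you suggest.

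The central gap is the ``wrong-direction'' problem. After cutting into quadrants and recursing with target depth $\lceil k/2\rceil$, each quadrant returns a mixture of block-increasing and block-decreasing pieces. Increasing pieces from $Q_{\mathrm{bl}}$ glue only with increasing pieces from $Q_{\mathrm{tr}}$, and decreasing ones from $Q_{\mathrm{tl}}$ only with decreasing ones from $Q_{\mathrm{br}}$. A decreasing piece sitting in $Q_{\mathrm{bl}}$ (or an increasing one in $Q_{\mathrm{tl}}$) has depth $\lceil k/2\rceil<k$ and is therefore \emph{not} a legal output; you must either re-process it or delete it. ``Feeding it back into the recursion in a coordinated way'' does not reduce the problem: the target depth for those stranded pieces is still $k$, and their union can be as large as a constant fraction of the quadrant, so you are essentially facing the original problem on a set of comparable size. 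There is no progress measure in your sketch that guarantees termination in $O(k^2\log k)$ pieces. The block-size matching issue compounds this: equalising block-sizes across a diagonal wastes points, and because the recursion has $\Theta(\log k)$ levels while each level already wants to delete roughly $4(\lceil k/2\rceil-1)^2\approx(k-1)^2$ points, the total deletion budget is exceeded by a $\log k$ factor.

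The paper takes a completely different route. It does not recurse on the depth. Instead it maintains an evolving structure called a $(k,l,t)$-\emph{pattern}: a nested sequence of $l$ depth-$k$ block-monotone pieces together with a $(k,t)$-\emph{configuration}, which is a monotone staircase of $t$ further depth-$k$ pieces interleaved with unstructured cells. A single application of Theorem~\ref{main} to the largest unstructured cell, followed by a case analysis on whether the extracted $3k$-deep piece is increasing or decreasing, either lengthens the configuration ($t\to t+1$) or collapses it into the outer list ($l\to l+t$, $t\to 0$), while peeling off $O(k\log k)$ legitimate depth-$k$ pieces and $O(k^2)$ deleted points (Lemma~\ref{partition_lemma_3}). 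After $O(k)$ such steps one reaches $l\ge 4k$ or $t\ge k$, and two terminal lemmas finish the partition. The bound $O(k^2\log k)$ comes from $O(k)$ iterations times $O(k\log k)$ pieces each; the $(k-1)^2$ remainder is obtained only at the very end, by Erd\H os--Szekeres clean-up of an $O(k^3)$ leftover. The key idea your sketch lacks is exactly this: a way to bank a ``wrong-direction'' extraction as structural progress (incrementing $t$) rather than as an immediate, unglueable output.
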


Given a point set $P\subset \mathbb{R}^2$, let \begin{align}
    U(P):=\{(x,y)\in \mathbb{R}^2; y> y',\ \forall (x',y')\in P\}, \tag{up}\\
    D(P):=\{(x,y)\in \mathbb{R}^2; y< y',\ \forall (x',y')\in P\}, \tag{down}\\
    L(P):=\{(x,y)\in \mathbb{R}^2; x< x',\ \forall (x',y')\in P\}, \tag{left}\\
    R(P):=\{(x,y)\in \mathbb{R}^2; x> x',\ \forall (x',y')\in P\}. \tag{right}
\end{align}

Our proof of Theorem~\ref{partition_pointset} relies on the following definitions. The constant $c$ below (and throughout this section) is from Theorem~\ref{path}. See Figure~\ref{fig_confpatt} for an illustration.
\begin{definition}\label{def_conf}
A point set $P$ is said to be a \emph{$(k,t)$-configuration} if $P$ can be written as a disjoint union of subsets $P = Y_1\cup Y_2\cup \cdots\cup Y_{2t+1}$ s.t.
\begin{itemize}
    \item $\forall i\in [t]$, $Y_{2i}$ is a block-monotone point set of depth $k$ and block-size at least $|Y_{2j+1}|/(3ck)^2$ for all $j\in \{0\}\cup[t]$;
    \item either $\bigcup_{j=i+1}^{2t+1} Y_j$ is located entirely in $R(Y_i)\cap U(Y_i)$ for all $i\in [2t]$, or $\bigcup_{j=i+1}^{2t+1} Y_j$ is located entirely in $R(Y_i)\cap D(Y_i)$ for all $i\in [2t]$.
\end{itemize}
\end{definition}

\begin{definition}\label{def_patt}
A point set $P$ is said to be a \emph{$(k,l,t)$-pattern} if $P$ can be written as a disjoint union of subsets $P = S_1\cup S_2\cup\dots\cup S_l\cup Y$ s.t.
\begin{itemize}
    \item  $Y$ is a $(k,t)$-configuration;
    \item $\forall i\in [l]$, $S_i$ is a block-monotone point set of depth $k$ and block-size at least $|Y|/(3ck)^2$;
    \item $\forall i\in [l]$, the set $(\bigcup_{j=i+1}^l S_j) \cup Y$ is located entirely in one of the following regions: $U(S_i)\cap L(S_i)$, $U(S_i)\cap R(S_i)$, $D(S_i)\cap L(S_i)$, and $D(S_i)\cap R(S_i)$.
\end{itemize}
\end{definition}

\begin{figure}[ht]
    \centering
    \includegraphics{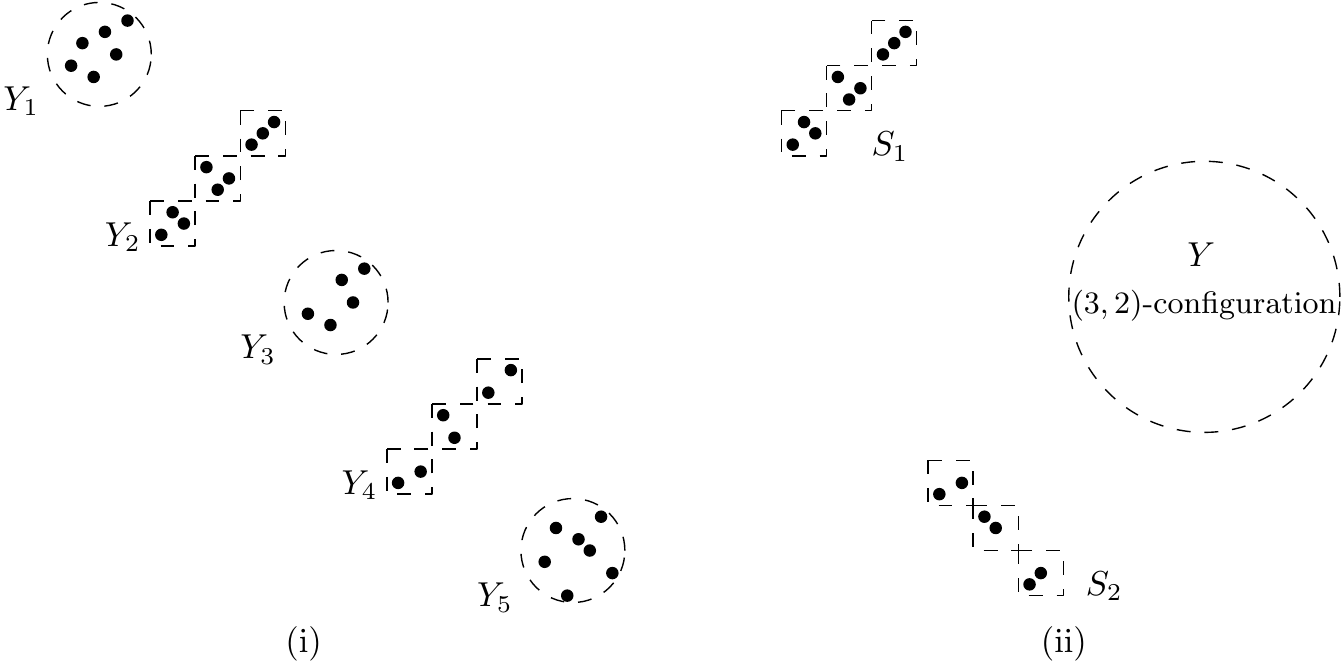}
    \caption{(i) a $(3,2)$-configuration. (ii) a $(3,2,2)$-pattern.}
    \label{fig_confpatt}
\end{figure}

If a planar point set $P$ is a $(k,4k,t)$-pattern or a $(k,l,k)$-pattern, the next two lemmas state that we can efficiently partition $P$ into few block-monotone point sets and a small remaining set.

\begin{lemma}\label{partition_lemma_1}
If $P$ is a $(k,4k,t)$-pattern, then $P$ can be partitioned into $O(k\log k)$ block-monotone point sets of depth at least $k$ and a remaining set of size $O(k^2)$.
\end{lemma}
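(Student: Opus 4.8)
The plan is to extract the block-monotone point sets greedily from the $(k,4k,t)$-pattern structure and bound the leftover. Recall that $P = S_1 \cup \dots \cup S_{4k} \cup Y$ where $Y$ is a $(k,t)$-configuration and each $S_i$ is block-monotone of depth $k$ with block-size at least $|Y|/(3ck)^2$. The key geometric point I would exploit is the nesting condition: for each $i$, the tail $(\bigcup_{j>i} S_j) \cup Y$ lies in a single quadrant-type region relative to $S_i$ (one of $U\cap L$, $U\cap R$, $D\cap L$, $D\cap R$). I claim this forces that $S_i$ together with \emph{one block} taken from each of some later pieces can be merged into a single longer block-monotone set, or — more cleanly — that each $S_i$ is already a usable block-monotone set of depth $k$ on its own, and the only real work is handling the configuration $Y$ and accounting for how many ``partial'' blocks get discarded.

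First I would deal with $Y$ directly. A $(k,t)$-configuration $Y = Y_1 \cup \dots \cup Y_{2t+1}$ has the even-indexed $Y_{2i}$ already block-monotone of depth $k$, and the odd-indexed pieces $Y_{2j+1}$ are the ``connector'' point sets whose sizes control the block-size lower bound. The $Y_{2i}$ contribute $t$ block-monotone sets immediately. The odd-indexed pieces $Y_{2j+1}$ are problematic only if they are not themselves block-monotone of depth $k$; here I would either recurse (apply Theorem \ref{main} / Theorem \ref{path} to each $Y_{2j+1}$ to chop it into few block-monotone pieces of depth $k$ plus a remainder of size $O(k^2)$), or — if the definition of $(k,t)$-configuration has already been set up so that $t = O(\log k)$ in the instances we care about — simply observe there are only $O(\log k)$ such pieces. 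I expect the intended reading is that $t$ is small (this is why the statement allows $O(k\log k)$ sets), so the honest count is: $t$ sets from the even pieces, plus $O(1)$ block-monotone sets per odd piece after one application of Theorem \ref{main}, each costing $O(k^2)$ in the remainder, but since there are $O(\log k)$ odd pieces and we only discard $(k-1)^2$ per application... actually to keep the remainder at $O(k^2)$ total I would instead collect all the small remainders across all $O(\log k)$ applications and note they sum to $O(k^2 \log k)$ — wait, that exceeds the claimed $O(k^2)$.

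The hard part will be keeping the discarded set down to $O(k^2)$ rather than $O(k^2\log k)$. I would resolve this by \emph{not} recursing on the connector pieces: instead, merge each odd connector piece $Y_{2j+1}$ (and likewise the structure coming from the $S_i$'s) into an adjacent block-monotone piece using the quadrant-nesting hypothesis. Concretely, if $\bigcup_{j>i}S_j \cup Y$ sits in, say, $U(S_i)\cap R(S_i)$, then appending this tail ``after'' $S_i$ preserves monotonicity in the relevant direction, so a single block drawn from the tail extends $S_i$'s block structure; iterating, the $4k$ sets $S_i$ can be organized so that all but $O(k)$ points of $\bigcup S_i$ get absorbed into $O(k\log k)$ genuine depth-$\ge k$ block-monotone sets, and the configuration $Y$ similarly collapses with its connectors absorbed, leaving a single global remainder of $O(k^2)$ coming only from the at-most-$(k-1)^2$-type loss of the one underlying Erd\H os--Szekeres extraction plus the $O(k)$ orphaned blocks of size $O(k)$ each. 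I would present this as: (1) partition $\bigcup_{i=1}^{4k} S_i$ into $O(k)$ block-monotone sets (one per $S_i$, keeping full blocks), discarding at most $O(k)$ points per $S_i$ of block-size slack, total $O(k^2)$; (2) handle $Y$ by the configuration structure, producing $O(\log k)$ additional block-monotone sets with no further loss because its connectors are absorbed into neighboring blocks via the $U/D$-nesting clause; (3) sum the counts. The crux — and the step I'd flag as the main obstacle — is step (2): showing the odd connector pieces of a $(k,t)$-configuration can be absorbed rather than recursed upon, which is exactly what the one-sided $R(Y_i)\cap U(Y_i)$ (or $R(Y_i)\cap D(Y_i)$) hypothesis in Definition \ref{def_conf} is designed to guarantee.
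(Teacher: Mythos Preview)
Your plan has a genuine gap. You try to exploit the internal $(k,t)$-configuration structure of $Y=Y_1\cup\dots\cup Y_{2t+1}$, hoping to absorb the odd ``connector'' pieces $Y_{2j+1}$ into neighboring even pieces. This cannot work as stated: by Definition~\ref{def_conf} the size inequality goes the wrong way --- the even pieces satisfy $|Y_{2i}|\ge k\cdot |Y_{2j+1}|/(3ck)^2=|Y_{2j+1}|/(9c^2k)$, so an odd connector may be larger than every even piece by a factor of order $k$, and you cannot absorb a large set into a smaller block-monotone one. Your fallback assumption that $t=O(\log k)$ is also unfounded; in the overall induction $t$ ranges up to $k-1$ when Lemma~\ref{partition_lemma_1} is invoked, so the number of pieces of $Y$ is $\Theta(k)$, not $O(\log k)$.

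The paper's proof ignores the internal configuration structure of $Y$ entirely. It applies Fact~\ref{partition_fact_1} (iterated Theorem~\ref{main}) to $Y$ as a whole, producing $O(k\log k)$ block-monotone sets of depth $9c^2k$ and a residue $Y'$ with $|Y'|\le\max\{|Y|/(9c^2k),(9c^2k-1)^2\}$. If $|Y'|\le (9c^2k-1)^2$ we are done. The real point of having $4k$ shells $S_i$ is the other case: by pigeonhole, $k$ of the $S_i$ lie in the \emph{same} quadrant of $Y$ (say $U(Y)\cap L(Y)$), and the nesting condition then forces these $k$ sets into a staircase $S_{i'}\subset D(S_i)\cap R(S_i)$ for $i<i'$. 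Since each $S_i$ has size at least $|Y|/(9c^2k)\ge |Y'|$, Fact~\ref{partition_fact_2} lets you carve out a piece $B_i\subset S_i$ of size exactly $|Y'|$ from each (losing $<k$ points per $S_i$, total $O(k^2)$), and then $B_1\cup\dots\cup B_k\cup Y'$ is a single block-monotone set of depth $k+1$. So the leftover $Y'$ is absorbed not into pieces of $Y$, but into equal-sized chunks pulled from the staircase of $S_i$'s --- that is the idea you are missing.
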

\begin{lemma}\label{partition_lemma_2}
If $P$ is a $(k,l,k)$-pattern, then $P$ can be partitioned into $O(k^2\log k+l)$ block-monotone point sets of depth at least $k$ and a remaining set of size $O(k^3)$.
\end{lemma}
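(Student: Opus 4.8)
The plan is to reduce a $(k,l,k)$-pattern to a $(k,4k,t)$-pattern situation, so that Lemma~\ref{partition_lemma_1} can be invoked, plus handle the remaining structure by hand. Recall that a $(k,l,k)$-pattern is $P = S_1\cup\cdots\cup S_l\cup Y$, where $Y$ is a $(k,k)$-configuration, each $S_i$ is a block-monotone set of depth $k$ and block-size at least $|Y|/(3ck)^2$, and the sets $S_{i+1},\ldots,S_l,Y$ all sit in one of the four quadrant-regions determined by $S_i$. The $l$ block-monotone sets $S_1,\ldots,S_l$ are already in the desired form, so they contribute $l$ parts of the partition directly and can be set aside; the real work is to partition the $(k,k)$-configuration $Y$.

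So the heart of the proof is: \emph{a $(k,k)$-configuration $Y$ can be partitioned into $O(k^2\log k)$ block-monotone point sets of depth at least $k$ and a remaining set of size $O(k^3)$.} Write $Y = Y_1\cup Y_2\cup\cdots\cup Y_{2k+1}$ as in Definition~\ref{def_conf}. The even-indexed pieces $Y_2,Y_4,\ldots,Y_{2k}$ are already block-monotone of depth $k$; that is only $k$ pieces, so they cost us nothing asymptotically and get set aside. What remains is to deal with the $k+1$ odd-indexed pieces $Y_1,Y_3,\ldots,Y_{2k+1}$. For each such $Y_{2j+1}$ I would argue it is itself (close to) a $(k,4k,t)$-pattern, or can be split into a bounded number of them, so that Lemma~\ref{partition_lemma_1} applies to each and yields $O(k\log k)$ block-monotone parts plus an $O(k^2)$ remainder; summing over the $O(k)$ odd pieces gives $O(k^2\log k)$ parts and an $O(k^3)$ remainder, as claimed. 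To see why each $Y_{2j+1}$ has pattern structure: apply Theorem~\ref{main} (equivalently the block-monotone-path extraction) to $Y_{2j+1}$ to pull off a block-monotone subset $S$ of depth $k$ with block-size $\Omega(|Y_{2j+1}|/k^2)$; the points of $Y_{2j+1}$ not captured by $S$ split into the four quadrant regions around $S$, and one recurses inside each region. Because the nested-region condition in Definition~\ref{def_patt} only asks that later sets lie in one quadrant relative to $S_i$, a standard bookkeeping (repeatedly extracting a block-monotone set and keeping the largest of the four residual quadrants, discarding the other three into the remainder, until fewer than $(3ck)^2 k$ points are left) builds a $(k, O(k), t)$-pattern after $O(k)$ extractions; the discarded quadrants over $O(k)$ rounds total $O(k^2)$ points since each round discards at most a constant times a geometrically shrinking quantity — here I would be a little careful and instead observe that we only need the block-size lower bounds of the definition to hold, which forces the number of rounds to be $O(\log k + \text{(depth budget)})$, keeping the discard under control.

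The main obstacle I anticipate is \emph{matching the block-size requirements across pieces}: Definition~\ref{def_conf} demands that each $Y_{2i}$ has block-size at least $|Y_{2j+1}|/(3ck)^2$ \emph{simultaneously for all} $j$, i.e. comparable to the largest odd piece, and similarly Definition~\ref{def_patt} ties every $S_i$'s block-size to $|Y|$. When I recurse inside a quadrant region, the subproblem is smaller, so freshly extracted block-monotone sets there have smaller block-size and might violate the global lower bound needed to fit the pattern definition one level up. The fix is to be careful about the order of extraction — always extract from the current largest remaining region first, so that block-sizes are non-increasing along the construction and the pattern/configuration inequalities are automatically satisfied — and to absorb any piece that has become too small (smaller than the threshold $(k-1)^2$ up to constants) into the remainder set, which only costs $O(k^3)$ total because there are $O(k)$ odd pieces each contributing an $O(k^2)$-size remainder from Lemma~\ref{partition_lemma_1}. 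Verifying that this greedy largest-first extraction indeed produces a legitimate $(k,4k,t)$-pattern (in particular that $4k$ rounds suffice before the leftover drops below threshold, using that each round either reduces the depth-type potential or shrinks the point count by a definite factor) is the one place where the routine-looking calculation actually needs to be done carefully.
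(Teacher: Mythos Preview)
Your reduction to the $(k,k)$-configuration $Y$ is correct, and setting aside $S_1,\ldots,S_l$ as $l$ finished parts is exactly what the paper does. But from that point on your plan has a genuine gap, and it is not the block-size bookkeeping you flagged --- it is more basic.

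You propose to treat each odd piece $Y_{2j+1}$ in isolation and coerce it into a $(k,4k,t)$-pattern so that Lemma~\ref{partition_lemma_1} applies. Two problems. First, when you extract a block-monotone subset $S$ from $Y_{2j+1}$ via Theorem~\ref{main}, the leftover $Y_{2j+1}\setminus S$ does \emph{not} split into the four quadrant regions $U(S)\cap L(S)$, etc.; those regions are defined only outside the bounding box of $S$, whereas leftover points are typically interleaved with $S$. So the step ``keep the largest quadrant, discard the other three'' is not well-defined, and even if it were, the discarded mass could be a constant fraction of $|Y_{2j+1}|$ per round, far exceeding $O(k^2)$. Second, turning an arbitrary point set into a $(k,4k,t)$-pattern is precisely the content of iterating Lemma~\ref{partition_lemma_3} --- the engine of Theorem~\ref{partition_pointset} itself --- so invoking that inside the proof of Lemma~\ref{partition_lemma_2} is circular (and in fact would require $O(k)$ applications of Lemma~\ref{partition_lemma_3}, each producing $O(k\log k)$ parts, already $O(k^2\log k)$ per odd piece and $O(k^3\log k)$ total).

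The idea you are missing is that the even-indexed sets $Y_{2},Y_{4},\ldots,Y_{2k}$ should \emph{not} be set aside: they are the tool for absorbing the residues of the odd pieces. The paper applies Fact~\ref{partition_fact_1} twice to each $Y_{2j+1}$, producing $O(k\log k)$ block-monotone parts and a residue $Y''_{2j+1}$ of size at most $\max\{|Y_{2j+1}|/(9c^2k(k+1)),\,O(k^2)\}$. The residues that fall under the $O(k^2)$ bound go straight to the remainder (total $O(k^3)$). For the others, the configuration inequality $|Y_{2i}|\ge k\cdot|Y_{2j+1}|/(3ck)^2$ guarantees $\sum_j |Y''_{2j+1}|\le |Y_{2i}|$ for every $i$, so by Fact~\ref{partition_fact_2} one carves from each $Y_{2i}$ a chunk $B_{j,i}$ of size $|Y''_{2j+1}|$ (for each large $j$), and the staircase geometry of the configuration makes $B_{j,1}\cup\cdots\cup B_{j,j}\cup Y''_{2j+1}\cup B_{j,j+1}\cup\cdots\cup B_{j,k}$ block-monotone of depth $k+1$. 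That interaction between odd and even pieces is the whole point of the configuration structure and is what your isolated-piece approach cannot reproduce.
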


Starting with an arbitrary point set $P$, which can be regarded as a $(k,0,0)$-pattern, we will repeatedly apply the following lemma until $P$ is partitioned into few block-monotone point sets, a set $P'$ that is either a $(k,4k,t)$-pattern or a $(k,l,k)$-pattern, and a small remaining set.
\begin{lemma}\label{partition_lemma_3}
For $l < 4k$ and $t < k$, a  $(k,l,t)$-pattern $P$ can be partitioned into $O(k\log k)$ block-monotone point sets of depth at least $k$, a point set $P'$, and a remaining set of size $O(k^2)$,
s.t. either (i) $|P'|\leq  k(3k-1)^2$; or (ii) $P'$ is a $(k,l,t+1)$-pattern; or (iii) $P'$ is a $(k,l+t,0)$-pattern. Moreover, $P'$ always satisfies either (i) or (ii) when $t=0$.
\end{lemma}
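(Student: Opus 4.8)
\textbf{Proof proposal for Lemma~\ref{partition_lemma_3}.}

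The plan is to apply Theorem~\ref{main} (equivalently, Theorem~\ref{path} with $q=2$) to a suitable large subset of the given $(k,l,t)$-pattern $P$ and then reorganize the resulting block-monotone piece together with the existing structure. Recall $P = S_1\cup\cdots\cup S_l\cup Y$ where $Y = Y_1\cup\cdots\cup Y_{2t+1}$ is a $(k,t)$-configuration. The ``active'' part we want to grow is $Y_{2t+1}$ (the last, unstructured block of the configuration), together with whatever new block-monotone sets we can peel off. First I would apply Theorem~\ref{main} to $Y_{2t+1}$: if $|Y_{2t+1}| \le k(3k-1)^2$ we are essentially in case (i) and can stop (absorbing $Y_{2t+1}$ into $P'$, with the rest of $P$'s structured blocks handled below); otherwise Theorem~\ref{main} yields a block-monotone subset $B\subset Y_{2t+1}$ of depth $k$ and block-size $\Omega(|Y_{2t+1}|/k^2)\ge |Y|/(3ck)^2$ — this is exactly the bound needed to serve either as a new $Y_{2t+2}$ or as a new $S$-set.

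The key step is a case analysis on the \emph{location} of $B$ relative to $Y_{2t}$ (the previous block of the configuration). The configuration hypothesis says $Y_{2t+1}\subset R(Y_{2t})\cap U(Y_{2t})$ (or the $D$-analogue); I will assume the $U$ case. Now $B$ is block-monotone, so it is either block-increasing or block-decreasing. If $B$ is block-increasing and its successor region (the part of $Y_{2t+1}\setminus B$ lying to its upper-right) is nonempty and large, then $Y_1,\dots,Y_{2t},B,(Y_{2t+1}\setminus B\text{ restricted to }R(B)\cap U(B))$ extends the configuration to depth-parameter $t+1$, giving case (ii); here one must check the monotone-staircase containment condition of Definition~\ref{def_conf}, which follows because block-increasing sets nest correctly with the $R\cap U$ chain. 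If instead $B$ is block-decreasing, or the part of $Y_{2t+1}$ in the ``wrong'' quadrant relative to $B$ is the large one, then $B$ cannot extend the configuration, but it \emph{can} be split off as a new $S$-set: one of the four quadrants $U/D \times L/R$ around $B$ contains a constant fraction of $Y_{2t+1}\setminus B$ (pigeonhole among four regions), and I would recurse/iterate this splitting $O(\log k)$ times — each round halves (up to constants) the size of the leftover until it drops below $k(3k-1)^2$ or the quadrant structure forces us to dump the configuration's earlier blocks $Y_1,\dots,Y_{2t}$ as $2t < 2k$ additional $S$-sets, yielding an $(k,l+t,0)$-pattern, i.e. case (iii). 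The $O(k\log k)$ block-monotone sets extracted and the $O(k^2)$-size remainder come from: at most $2t\le 2k$ dumped configuration blocks, $O(\log k)$ rounds of quadrant-splitting each producing one block-monotone set, and the $O(k)$ bits discarded when a block-size falls just short of the threshold (each such discard costs $\le k\cdot O(k) = O(k^2)$ points total, appropriately bounded).

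The main obstacle I expect is the bookkeeping in the case analysis: verifying that when $B$ is block-increasing it genuinely satisfies the chain-containment clause of Definition~\ref{def_conf} with respect to all of $Y_1,\dots,Y_{2t}$ (not just $Y_{2t}$), and symmetrically that when we convert blocks to $S$-sets they satisfy the four-quadrant clause of Definition~\ref{def_patt} with respect to everything that comes after them. These are geometric monotonicity facts — a block-increasing staircase placed in the upper-right quadrant of a previous block keeps all later material in one quadrant — but getting the quadrant labels ($U$ vs.\ $D$, $L$ vs.\ $R$) to propagate consistently through the iteration requires care. The ``moreover'' clause (when $t=0$, only (i) or (ii) occurs) is the easy special case: with $t=0$ there is no earlier configuration block to conflict with, so any block-monotone $B$ extracted from $Y_1 = P\setminus(S_1\cup\cdots\cup S_l)$ either is too small (case (i)) or becomes $Y_2$ with the leftover becoming $Y_3$ (case (ii)), and case (iii) never arises. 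Finally, all steps are constructive and use only the polynomial-time algorithm of Theorem~\ref{main} plus $O(1)$-time quadrant tests, so the claimed polynomial running time is immediate.
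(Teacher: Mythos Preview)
Your proposal has a genuine gap at the very first step. You apply Theorem~\ref{main} to $Y_{2t+1}$ and assert that the resulting block-monotone set $B$ has block-size $\Omega(|Y_{2t+1}|/k^2)\ge |Y|/(3ck)^2$. This inequality is false in general: the definition of a $(k,t)$-configuration places no lower bound on $|Y_{2t+1}|$ relative to the other odd-indexed pieces, so $Y_{2t+1}$ may be tiny while, say, $Y_1$ is enormous. In that situation $B$ cannot serve as a new $Y_{2t+2}$ (Definition~\ref{def_conf} requires its block-size to dominate $|Y_{2j+1}|/(3ck)^2$ for \emph{every} $j$), nor as a new $S$-set (Definition~\ref{def_patt} requires block-size $\ge |Y|/(3ck)^2$). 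For the same reason your case-(i) argument fails: $|Y_{2t+1}|\le k(3k-1)^2$ tells you nothing about the other $Y_{2j+1}$'s, which are not ``structured blocks'' and cannot simply be declared part of $P'$ or peeled off. The paper avoids this by applying Theorem~\ref{main} to the \emph{largest} odd-indexed piece $Y_{i_0}$; only then is the extracted block automatically large enough relative to every $Y_{2j+1}$, and only then does $|Y_{i_0}|\le(3k-1)^2$ force all odd pieces to be small.

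Two further structural issues. First, the paper extracts a block-monotone set $X$ of depth $3k$, not $k$: the outer thirds $X_1,X_3$ are indispensable for absorbing the stray odd-indexed pieces (in Case~1) or the off-diagonal regions of the $3\times 3$ grid (in Case~2), while the middle third $X_2$ becomes the new configuration block. Your depth-$k$ set has nothing left over to perform this absorption, and the vague ``pigeonhole among four quadrants, iterate $O(\log k)$ times'' does not substitute for it. Second, your case split is inverted. Under the assumption $\bigcup_{j>i}Y_j\subset R(Y_i)\cap U(Y_i)$, a block-\emph{increasing} $X$ cannot be slotted into the staircase (there is no way to carve out new odd blocks both below-left and above-right of it), so the paper uses that case to collapse the configuration into $S$-sets (case~(iii)); it is the block-\emph{decreasing} case where the $3\times3$ grid yields natural lower-left and upper-right corners to extend the configuration (case~(ii)). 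Your proposed extension in the block-increasing case would leave almost all of $Y_{2t+1}\setminus B$ unaccounted for.
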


Before we prove the lemmas above, let us use them to prove Theorem \ref{partition_pointset}.
\begin{proof}[Proof of Theorem~\ref{partition_pointset}]
Let $P$ be the given point set. For $i\geq 0$, we inductively construct a partition $\mathcal{F}_i\cup \{P_i,E_i\}$ of $P$ s.t.
\begin{itemize}
    \item $P_i$ is a $(k,l_i,t_i)$-pattern or $|P_i| \leq k(3k-1)^2$, 
    \item $|E_i|=O(ik^2)$,
    \item $\mathcal{F}_i$ is a disjoint family of block-monotone point sets of depth at least $k$, and $|\mathcal{F}_i| = O(ik\log k)$.
\end{itemize}

We start with $P_0  = P$, which is a $(k,0,0)$-pattern, and $\mathcal{F}_0= E_0 = \emptyset$. Suppose we have constructed the $i$-th partition $\mathcal{F}_i\cup \{P_i,E_i\}$ of $P$. If $|P_i| \leq k(3k-1)^2$, or $l_i \geq 4k$, or $t_i \geq k$, we end this inductive construction process, otherwise, we construct the next partition $\mathcal{F}_{i+1}\cup \{P_{i+1},E_{i+1}\}$ as follows: According to Lemma~\ref{partition_lemma_3}, $P_i$ can be partitioned into $r_i=O(k\log k)$ block-monotone point sets with depth at least $k$, denoted as $\{A_{i,1},\dots, A_{i,r_i}\}$, a point set $P_i'$, and a remaining set $E_i'$ of size $O(k^2)$. We define $\mathcal{F}_{i + 1}:=\mathcal{F}_i\cup\{A_{i,1},\dots, A_{i,r_i}\}$, $P_{i + 1}:=P_i'$, and $E_{i+1}:=E_i\cup E_i'$. Clearly, we have $|\mathcal{F}_{i + 1}| = |\mathcal{F}_{i}|  + r_i = O((i + 1)k\log k)$ and $|E_{i+1}|=|E_i|+|E_i'|=O((i+1)k^2)$ as claimed. By Lemma~\ref{partition_lemma_3}, we have either (i) $|P_{i+1}|\leq  k(3k-1)^2$, so the construction ends; or (ii) $P_{i+1}$ is a $(k,l_i,t_i+1)$-pattern, so $l_{i+1}=l_i$ and $t_{i+1}=t_{i}+1$; or (iii) $P_{i+1}$ is a $(k,l_i+t_i,0)$-pattern, so $l_{i+1}=l_i+t_i$ and $t_{i+1}=0$. Moreover, when $t_i=0$, Lemma~\ref{partition_lemma_3} guarantees that $P_{i+1}$ always satisfies (i) or (ii).

Let $\mathcal{F}_w\cup \{P_w,E_w\}$ be the last partition of $P$ constructed in this process. Here, $P_w$ is a $(k,l_w,t_w)$-pattern. We must have either $|P_w| \leq k(3k-1)^2$, or $l_w \geq 4k$, or $t_w \geq k$. Since $t_{i+1}\leq t_i+1$ and $l_{i+1}\leq l_{i}+t_i$ for all $i$, we have $t_w\leq k$ and $l_w\leq 5k$. Since we always construct the $(i+1)$-th partition with $P_{i+1}$ satisfying either (i) or (ii) when $t_i = 0$, the sum $l_i + t_i$ always increases by at least $1$ after two inductive steps. So we have $w/2\leq t_w+l_w\leq 6k$ and hence $w\leq 12k$.

Now we handle $\mathcal{F}_w\cup \{P_w,E_w\}$ based on how the construction process ends.

If the construction process ended with $|P_w| \leq k(3k-1)^2$, we define $E_{w+1}=E_w\cup P_w$ and $\mathcal{F}_{w + 1}=\mathcal{F}_{w}$. Since $w\leq 12k$, we have $|\mathcal{F}_{w + 1}|=O(k^2\log k)$ and $|E_{w+1}|=O(k^3)$.

If the construction process ended with $l_w\geq 4k$, by Definition~\ref{def_patt}, we can partition $P_w$ into $l_w-4k$ many block-monotone point sets of depth $k$, denoted as $\{S_1,\dots, S_{l_w-4k}\}$, and a $(k,4k,t_w)$-pattern $P'_w$. Then, by Lemma~\ref{partition_lemma_1}, $P'_w$ can be partitioned into $r_w=O(k\log k)$ block-monotone point sets of depth at least $k$, denoted as $\{A_{w,1},\dots, A_{w,r_w}\}$, and a remaining set $E'_w$ of size $O(k^2)$. We define $E_{w+1}=E_w\cup E'_w$ and
    \[\mathcal{F}_{w + 1}=\mathcal{F}_{w}\cup\{S_1,\dots, S_{l_w-4k},A_{w,1},\dots, A_{w,r_w}\}.\]
Using $w\leq 12k$, $l_w\leq 5k$, and other bounds we mentioned above, we can check that $|\mathcal{F}_{w + 1}|=O(k^2\log k)$ and $|E_{w+1}|=O(k^3)$.

If the construction process ended with $t_w\geq k$, we actually have $t_w=k$ and $l_w<4k$. By Lemma~\ref{partition_lemma_2}, we can partition $P_w$ into $r_w=O(k^2\log k+l_{w})$ block-monotone point sets of depth at least $k$, denoted as $\{A_{w,1},\dots, A_{w,r_w}\}$, and a remaining set $E'_w$ of size $O(k^3)$. We define $E_{w+1}=E_w\cup E'_w$ and $ \mathcal{F}_{w + 1}=\mathcal{F}_{w}\cup\{A_{w,1},\dots, A_{w,r_w}\}$. Again, we have $|\mathcal{F}_{w + 1}|=O(k^2\log k)$ and $|E_{w+1}|=O(k^3)$.

Overall, we can always obtain a partition $\mathcal{F}_{w + 1}\cup \{E_{w + 1}\}$ of $P$ with $|\mathcal{F}_{w + 1}|=O(k^2\log k)$ and $|E_{w+1}|=O(k^3)$. Using the classical Erd{\H o}s-Szekeres theorem, we can always find a monotone sequence of length at least $k$ in $E_{w+1}$ when $|E_{w+1}|>(k-1)^2$. By a repeated application of this fact, we can partition $E_{w+1}$ into $O(k^2)$ block-monotone point sets of depth $k$ and block-size~$1$, and a remaining set $E$ of size at most $(k-1)^2$. We define $\mathcal{F}$ to be the union of $\mathcal{F}_{w+1}$ and these block-monotone sequences. The partition $\mathcal{F}\cup \{E\}$ of $P$ has the desired properties, completing our proof.
\end{proof}

We now give proofs for Lemmas \ref{partition_lemma_1}, \ref{partition_lemma_2}, and \ref{partition_lemma_3}. We need the following two facts.  

\begin{fact}\label{partition_fact_1}
For any positive integer $k$, every point set $P$ can be partitioned into $O(k\log k)$ block-monotone point sets of depth $k$ and a remaining set $P'$ with $|P'|\leq \max\{|P|/k,(k-1)^2\}$.
\end{fact}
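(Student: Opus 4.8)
The plan is to repeatedly peel off block-monotone point sets of depth $k$ using Theorem~\ref{main}, each time removing a positive fraction of the remaining points, until what is left is small. Concretely, I would run the following loop starting with $Q_0 = P$: given the current set $Q_i$, if $|Q_i| \le \max\{|P|/k,(k-1)^2\}$ we stop and set $P' = Q_i$; otherwise $|Q_i| > (k-1)^2$, so Theorem~\ref{main} applied to the sequence corresponding to $Q_i$ yields a block-monotone subset $B_i \subseteq Q_i$ of depth $k$ and block-size $s_i = \Omega(|Q_i|/k^2)$, hence $|B_i| = ks_i = \Omega(|Q_i|/k)$. We remove $B_i$, set $Q_{i+1} = Q_i \setminus B_i$, and continue. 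Each $B_i$ is one of the block-monotone point sets in the final partition, and $P'$ is the remaining set; by construction $|P'| \le \max\{|P|/k,(k-1)^2\}$, which is exactly the claimed bound.

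The main point to verify is that the loop terminates after $O(k\log k)$ iterations. Write $|B_i| \ge |Q_i|/(\alpha k)$ for an absolute constant $\alpha \ge 1$ coming from the $\Omega$ in Theorem~\ref{main} (one must check that the block-size bound $s = \Omega(n/k^2)$ is genuinely of this form for all $n > (k-1)^2$, including the small-$n$ regime where Theorem~\ref{main} falls back to the classical Erd\H os--Szekeres bound and gives block-size $1$ with depth $\ge k$; in that case $|B_i| \ge k \ge |Q_i|/k$ once $|Q_i| \le k^2$, so the fraction bound still holds, possibly after enlarging $\alpha$). Then $|Q_{i+1}| \le (1 - \tfrac{1}{\alpha k})|Q_i|$, so after $m$ steps $|Q_m| \le (1-\tfrac{1}{\alpha k})^m |P| \le e^{-m/(\alpha k)}|P|$. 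Choosing $m = \lceil \alpha k \ln k \rceil$ forces $|Q_m| \le |P|/k$, so the loop halts within $O(k\log k)$ steps and produces at most $O(k\log k)$ block-monotone pieces.

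The step I expect to require the most care is the one just flagged: ensuring that the lower bound on $|B_i|$ really is of the clean form $|Q_i|/(\alpha k)$ uniformly, rather than only asymptotically in $|Q_i|$. The subtlety is that $\Omega(n/k^2)$ could a priori hide an additive loss (e.g. $cn/k^2 - O(1)$) that dominates when $|Q_i|$ is comparable to $k^2$; one resolves this by treating the two regimes separately exactly as in the proof of Theorem~\ref{main} — for $|Q_i| \ge (ck)^2$ use the $s \ge |Q_i|/(ck)^2$ bound from Theorem~\ref{path} so that $|B_i| = ks \ge |Q_i|/(c^2 k)$, and for $(k-1)^2 < |Q_i| < (ck)^2$ use depth-$k$ block-size-$1$ to get $|B_i| \ge k > |Q_i|/(c^2 k)$. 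Everything else is a routine geometric-series estimate, and the correspondence between sequences and point sets $(i,a_i)$ set up at the start of Section~\ref{sec:partition} lets us invoke Theorem~\ref{main} verbatim.
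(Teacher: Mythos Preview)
Your proposal is correct and is essentially the paper's own argument: the paper also establishes Fact~\ref{partition_fact_1} by repeatedly pulling out block-monotone subsets via Theorem~\ref{main} and bounding the number of rounds with the elementary inequality $(1-x^{-1})^{x\log x}\le x^{-1}$, which is exactly your geometric-decay estimate $e^{-m/(\alpha k)}|P|\le |P|/k$ at $m=\Theta(k\log k)$. Your added discussion of why the $\Omega(1/k)$ fraction removed per step is uniform across the two size regimes of $Q_i$ is a welcome elaboration of a point the paper leaves implicit.
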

This fact can be established by repeatedly using Theorem~\ref{main} to pull out block-monotone point sets and applying the elementary inequality $(1-x^{-1})^{x\log x}\leq x^{-1}$ for any $x>1$.

\begin{fact}\label{partition_fact_2}
For any positive integer $k$ and $m$, every block-monotone point set $P$ with depth $k$ and $|P|\geq m$ can be partitioned into a block-monotone point set of depth $k$, a subset of size exactly $m$, and a remaining set of size less than $k$.
\end{fact}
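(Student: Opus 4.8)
The plan is to carve a few points out of each block and keep the rest as a shallower block-monotone set. Write the given point set as $P = B_1 \cup B_2 \cup \cdots \cup B_k$, where $B_1 < B_2 < \cdots < B_k$ in the $x$-order and $s := |P|/k$ is the common size of the blocks (an integer by the definition of a depth-$k$ block-monotone set); by symmetry we may assume $P$ is block-increasing. The one structural fact to record is that block-monotonicity is inherited by sub-blocks: if $B_i' \subseteq B_i$ with $|B_i'| = s'$ for every $i \in [k]$, then $\bigcup_{i=1}^k B_i'$ is again block-increasing, of depth $k$ and block-size $s'$ (vacuously so if $s' = 0$), since any transversal $(y_{i_1},\dots,y_{i_k})$ of the smaller blocks is in particular a transversal of the original blocks and hence increasing.

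Next, set $j := \lceil m/k \rceil$. From $|P| = ks \geq m$ we get $m/k \leq s$, hence $j \leq s$, so each block has at least $j$ points. Remove from each block $B_i$ an arbitrary subset of exactly $j$ points, letting $B_i'$ denote the remaining $s-j$ points; by the inherited-monotonicity fact, $P' := \bigcup_{i=1}^k B_i'$ is a block-monotone point set of depth $k$ and block-size $s - j$. The collection of removed points has size exactly $kj$, and the definition of the ceiling gives $k(j-1) < m \leq kj$, so $0 \leq kj - m < k$. Partition the $kj$ removed points into a set $M$ of size exactly $m$ and a set $R$ of size $kj - m < k$. Then $P = P' \cup M \cup R$ is the desired partition.

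I expect no real obstacle here: the argument is elementary bookkeeping. The only two points deserving a second glance are the inequality $j \leq s$ — which is precisely what the hypothesis $|P| \geq m$ provides and which guarantees that $j$ points can actually be deleted from every block — and the degenerate case where $s = j$ (equivalently $k \mid m$ and $m = |P|$), in which $P'$ is empty; one either adopts the convention that the empty set is a depth-$k$ block-monotone set of block-size $0$, or observes that in every application of this fact within the paper one has $m < |P|$, so this case does not arise.
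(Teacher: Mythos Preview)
Your proof is correct and follows exactly the paper's approach: remove $j=\lceil m/k\rceil$ points from each of the $k$ blocks, split the $kj$ removed points into a set of size $m$ and a remainder of size $kj-m<k$, and keep the rest as a depth-$k$ block-monotone set. The paper states this in a single sentence, while you spell out the inherited-monotonicity observation and the edge cases more carefully.
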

This fact can be established by taking out $\lceil m/k\rceil$ points from each block of $P$. Then we have taken out $k\cdot\lceil m/k\rceil=m+r$ points, where $0\leq r<k$.

\begin{proof}[Proof of Lemma~\ref{partition_lemma_1}]
Write the given $(k,4k,t)$-pattern $P=S_1\cup \dots \cup S_{4k}\cup Y$ as in Definition~\ref{def_patt}. By definition, each block-monotone point set $S_i$ is contained in one of the $4$ regions: $U(Y)\cap L(Y)$, $U(Y)\cap R(Y)$, $D(Y)\cap L(Y)$, and $D(Y)\cap R(Y)$. By the pigeonhole principle, there are $k$ point sets among $S_1,\dots,S_{4k}$ all contained in one of the regions above. Without loss of generality, we assume there are $k$ among them all contained in $U(Y)\cap L(Y)$. In fact, we can further assume that these point sets are $S_1,\dots,S_k$ as the proof also works for other cases.

We have $S_{i'}\subset D(S_{i})\cap R(S_{i})$ for all $1\leq i<i'\leq k$. Indeed, since $Y\subset D(S_{i})\cap R(S_{i})$, Definition~\ref{def_patt} guarantees that $(\bigcup_{j=i+1}^k S_j) \cup Y$ is contained in $D(S_{i})\cap R(S_{i})$ and, in particular, $S_{i'}$ is contained in this region. See Figure~\ref{fig_partition_lemma} for an illustration.

\begin{figure}
    \centering
    \includegraphics{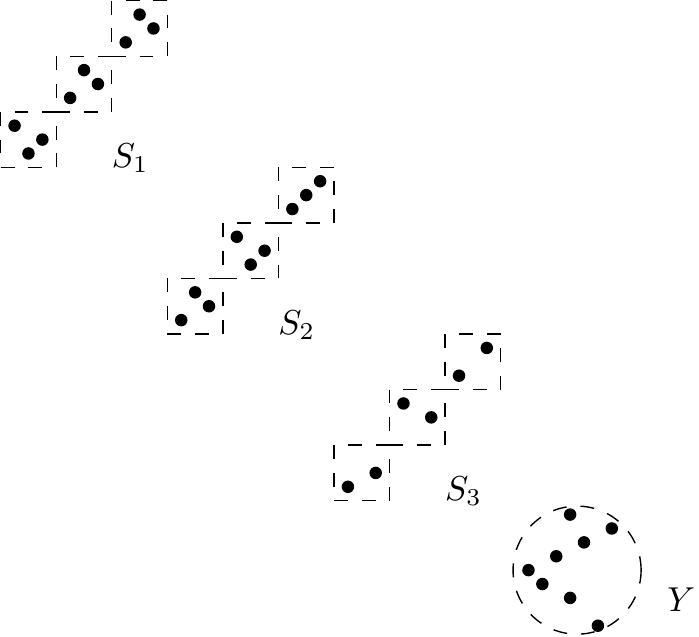}
    \caption{In proof of Lemma~\ref{partition_lemma_1}, $S_{i'}\subset D(S_{i})\cap R(S_{i})$ for $i<i'$.}
    \label{fig_partition_lemma}
\end{figure}

Now by Fact~\ref{partition_fact_1}, we can partition $Y$ into $\{A_1,\dots, A_r, Y'\}$, where $r=O(k\log k)$, s.t. each $A_i$ is block-monotone of depth $9c^2k$, and either $|Y'|\leq |Y|/(9c^2k)$ or $|Y'|\leq (9c^2k-1)^2$. If $|Y'|\leq (9c^2k-1)^2$, we have partitioned $P$ into $O(k\log k)$ block-monotone point sets of depth at least $k$, which are $\{A_1,\dots, A_r, S_1,\dots, S_{4k}\}$, and a remaining set $Y'$ of size $O(k^2)$, as wanted.

If $|Y'|\leq |Y|/(9c^2k)$, by Definition~\ref{def_patt} we have $|Y'| \leq |S_i|$ for $i\in [k]$. We can apply Fact~\ref{partition_fact_2} with $m:=|Y'|$ to obtain a partition $S_i=S'_i\cup B_i\cup E_i$ where $S'_i$ is block-monotone of depth $k$, $|B_i|=|Y'|$, and $|E_i|\leq k$. Observe that $X:=B_1\cup B_2\cup \dots \cup B_k \cup Y'$ is block-monotone of depth $k+1$ by its construction. Then we have partitioned $P$ into $O(k\log k)$ many block-monotone point sets, which are $\{A_1,\dots, A_r, S'_1,\dots, S'_k, S_{k+1},\dots, S_{4k}, X\}$, and a remaining set $E:=\bigcup_{i=1}^k E_i$ of size $O(k^2)$, as wanted.
\end{proof}
\begin{proof}[Proof of Lemma~\ref{partition_lemma_2}]
Write the given $(k,l,k)$-pattern $P=S_1\cup \dots\cup S_l \cup Y$ as in Definition~\ref{def_patt} and the $(k,k)$-configuration $Y=Y_1\cup \dots \cup Y_{2k+1}$ as in Definition~\ref{def_conf}. Since each $S_i$ is block-monotone of depth $k$, it suffices to partition $Y$ into $O(k^2\log k)$ many block-monotone point sets of depth at least $k$ and a remaining set of size $O(k^3)$.

For each $j\in \{0\}\cup [k]$, we apply Fact~\ref{partition_fact_1} to obtain a partition of $Y_{2j+1}$ into $O(k\log k)$ many block-monotone point sets of depth $9c^2k$ and a remaining set $Y'_{2j+1}$ of size either at most $|Y_{2j+1}|/(9c^2k)$ or at most $(9c^2k-1)^2$. We can apply Fact~\ref{partition_fact_1} again to partition $Y'_{2j+1}$ into $O(k\log k)$ many block-monotone point sets of depth $k+1$ and a remaining set $Y''_{2j+1}$ with\begin{align}\label{eq_partition_lemma_2}
    |Y''_{2j+1}|\leq \max\{|Y_{2j+1}|/(9c^2k(k+1)),(9c^2k-1)^2\}.
\end{align} Denote the block-monotone point sets produced in this process as $\{A_{j,1},\dots, A_{j,r_j}\}$ where $r_j=O(k\log k)$.

Next we denote $J_1:=\{j\in \{0\}\cup [k]; |Y_{2j+1}''|>(9c^2k-1)^2\}$ and $J_2:=(\{0\}\cup [k])\setminus J_1$. For each $j\in J_1$ and $i\in [k]$, we must have
    \[|Y_{2j+1}''|\leq |Y_{2j+1}|/(9c^2k(k+1))\leq |Y_{2i}|/(k+1),\]
where the second inequality is by Definition~\ref{def_conf}. Hence $|Y_{2i}|\geq |\bigcup_{j\in J_1} Y''_{2j+1}|$. We can apply Fact~\ref{partition_fact_2} with $m:=|\bigcup_{j\in J_1} Y''_{2j+1}|$ to obtain a partition $Y_{2i}=Y'_{2i}\cup B_{i}\cup E_{i}$ where $Y'_{2i}$ is block-monotone of depth $k$, $|B_{i}|=m$, and $|E_{i}|\leq k$. Since $|B_{i}|=|\bigcup_{j\in J_1} Y''_{2j+1}|$, we can take a further partition $B_{i}=\bigcup_{j\in J_1} B_{j,i}$ with $|B_{j,i}|=|Y''_{2j+1}|$ for each $j\in J_1$. Then we observe that \[X_{j}:=B_{j,1}\cup \dots \cup B_{j,j}\cup Y''_{2j+1}\cup B_{j,j+1}\cup \dots \cup B_{j,k}\] is block-monotone of depth $k+1$ for each $j\in J_1$ by its construction.

Finally, let $E:=(\bigcup_{i=1}^k E_i)\cup(\bigcup_{j\in J_2}Y''_{2j+1})$, it easy to check that $|E|=O(k^3)$. So we have partitioned $Y$ into $O(k^2\log k)$ many block-monotone point sets, which are $(\bigcup_{j=0}^k \{A_{j,1},\dots, A_{j,r_j}\})\cup \{X_j\}_{j\in J_1} \cup \{Y'_{2},Y'_4,\dots, Y'_{2k}\}$, and a remaining set $E$ of size $O(k^3)$, as wanted.
\end{proof}
\begin{proof}[Proof of Lemma~\ref{partition_lemma_3}]
Write the given $(k,l,t)$-pattern $P=S_1\cup \dots\cup S_l \cup Y$ as in Definition~\ref{def_patt} and its constituent $(k,t)$-configuration $Y=Y_1\cup \dots \cup Y_{2t+1}$ as in Definition~\ref{def_conf}. Denote a set with largest size among $\{Y_{2j+1}; j\in \{0\}\cup [t]\}$ as $Y_{i_0}$. By Definition~\ref{def_conf}, we can assume without loss of generality that\begin{equation}\label{eq:partition_lemma_3}
    \bigcup\nolimits_{j=i+1}^{2t+1} Y_j \subset R(Y_i)\cap U(Y_i) \text{ for all } i\in [2t]
\end{equation}

If $|Y_{i_0}|\leq (3k-1)^2$, we can partition $P$ into $l+t=O(k)$ many block-monotone point sets of depth $k$, which are $\{S_1,\dots, S_l,Y_2,Y_4,\dots, Y_{2t}\}$, and a set $P':=\bigcup_{j=0}^t Y_{2j+1}$ of size at most $k(3k-1)^2$ since $t<k$. So we conclude the lemma with $P'$ as described in (i).

Now we assume $|Y_{i_0}|>(3k-1)^2$. Apply Theorem~\ref{main} to extract a block-monotone point set $X\subset Y_{i_0}$ of depth $3k$ and block-size at least $|Y_{i_0}|/(3ck)^2$ and name the $i$-th block of $X$ as $B_i$ for $i\in [3k]$. Notice that $X$ splits into three parts\begin{equation*}
    X_1:=B_1\cup \dots\cup B_k,\ X_2:=B_{k+1}\cup \dots\cup B_{2k},\ X_3:=B_{2k+1}\cup\dots\cup B_{3k}. 
\end{equation*} Our proof splits into two cases: $X$ being block-increasing or $X$ being block-decreasing.

\medskip

\noindent \emph{Case 1.} Suppose $X$ is block-increasing, we define\begin{equation*}
    P'=(S_1\cup \dots\cup S_l)\cup (Y_2\cup Y_4\cup\dots\cup Y_{i_0-1})\cup (Y_{2t}\cup Y_{2t-2}\cup\dots\cup Y_{i_0+1})\cup (Y_{i_0}\setminus X).
\end{equation*} By Definition~\ref{def_patt}, we can check that $P'$ is a $(k,k+l,0)$-pattern with $Y_{i_0}\setminus X$ being its constituent $(k,0)$-configuration. Let $Z_1:=Y_1\cup Y_3\cup\dots\cup Y_{i_0-2}$ and $Z_3:=Y_{i_0+2}\cup Y_{i_0+4}\cup \dots \cup Y_{2t+1}$. We claim that $X_i\cup Z_i$ can be partitioned into $O(k\log k)$ block-monotone point sets of depth at least $k$ and a remaining set of size $O(k^2)$ for $i=1,3$. Given this claim and the fact that $P=P'\cup X\cup Z_1\cup Z_3$, we conclude the lemma with $P'$ as described in (iii).

Now we justify this claim for $X_1\cup Z_1$ and the justification for $X_3\cup Z_3$ is similar. By an argument similar to \eqref{eq_partition_lemma_2}, we can apply Fact~\ref{partition_fact_1} three times to partition $Z_1$ into $\{A_{1},\dots,A_{r},Z_1'\}$, where $r=O(k\log k)$, s.t. each $A_{i}$ is block-monotone of depth at least $k$ and $|Z_1'|\leq\max\{|Z_1|/(9c^2k^3), (9c^2k-1)^2\}$. If $|Z_1'|\leq (9c^2k-1)^2$, we have partitioned $X_1\cup Z_1$ into $O(k\log k)$ block-monotone point sets of depth at least $k$, which are $\{A_{1},\dots,A_{r},X_1\}$, and a remaining set $Z_1'$ of size $O(k^2)$ as claimed.

If $|Z_1'|\leq |Z_1|/(9c^2k^3)$, noticing that $|Z_1|\leq t|Y_{i_0}|\leq k|Y_{i_0}|$, we have $|Z_1'|\leq |Y_{i_0}|/(3ck)^2\leq |B_i|$ for all $i\in [k]$. We can take a partition $B_i=B_{i}'\cup B_i''$ with $|B_i'|=|Z_1'|$. We observe that $X_1':=Z_1'\cup B_1'\cup \dots\cup B_{k}'$ is block-increasing of depth $k+1$ and $X_1'':=B''_{1}\cup \dots\cup B''_{k}$ is block-increasing of depth $k$ by their constructions. So we have partitioned $X_1\cup Z_1$ into $O(k\log k)$ block-monotone point sets of depth at least $k$, which are $\{A_1,\dots, A_r, X_1', X_1''\}$, as claimed.

\medskip

\noindent \emph{Case 2.} Suppose $X$ is block-decreasing, we choose two points in the following regions:\begin{align*}
    &(x_1, y_1)\in R(B_k)\cap D(B_k)\cap L(B_{k+1})\cap U(B_{k+1}),\\
    &(x_2, y_2)\in R(B_{2k})\cap D(B_{2k})\cap L(B_{2k+1})\cap U(B_{2k+1}).
\end{align*} Also we require $x_1$ or $x_2$ isn't the $x$-coordinate of any element in $P$, and $y_1$ or $y_2$ isn't the $y$-coordinate of any element in $P$. We use the lines $x=x_i$ and $y=y_i$ for $i=1,2$ to divide the plane into a $3\times 3$ grid and label the regions $R_i,i=1,\dots,9$ as in Figure~\ref{fig:partition_9region}.

\begin{figure}[ht]
    \centering
    \includegraphics{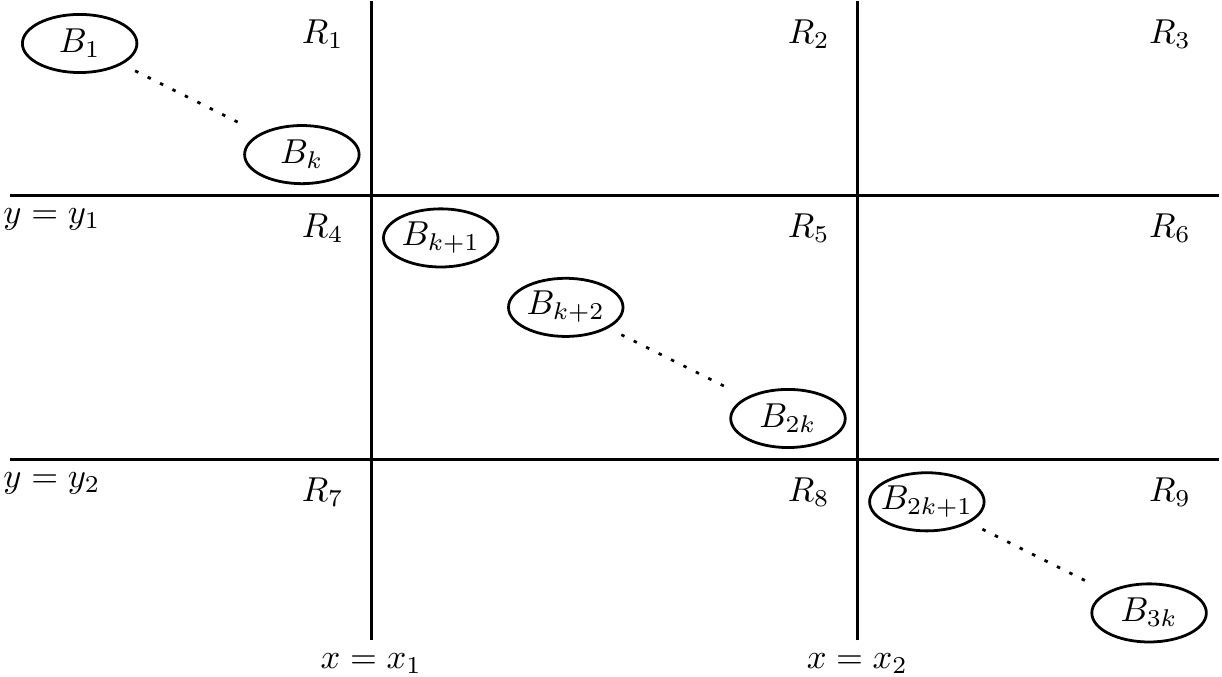}
    \caption{Division of the plane into $9$ regions according to $(x_i,y_i),i=1,2$. Each ellipse represents a cluster of points as defined in the proof.}
    \label{fig:partition_9region}
\end{figure}

We define\begin{equation*}
    Y':=Y_1\cup Y_2\cup \dots\cup Y_{i_0-1}\cup (R_7\cap Y_{i_0})\cup X_2\cup (R_3\cap Y_{i_0})\cup Y_{i_0+1}\cup \dots \cup Y_{2t+1}.
\end{equation*} Using condition \eqref{eq:partition_lemma_3} and Definition~\ref{def_conf}, we can check that $Y'$ is a $(k,t+1)$-configuration. And $P':=S_1\cup \dots\cup S_l\cup Y'$ is a $(k,l,t+1)$-pattern according to Definition~\ref{def_patt}. Let $Z_1:=(Y_{i_0}\setminus X)\cap (R_5\cup R_6\cup R_8\cup R_9)$ and $Z_2:=(Y_{i_0}\setminus X)\cap (R_1\cup R_2\cup R_4)$. We claim that $X_i\cup Z_i$ can be partitioned into $O(k\log k)$ block-monotone point sets of depth at least $k$ and a remaining set of size $O(k^2)$ for $i=1,3$. Given this claim and the fact that $P=P'\cup X_1\cup X_3\cup Z_1\cup Z_3$, we conclude the lemma with $P'$ as described in (ii).

Now we justify this claim for $X_1\cup Z_1$ and the justification for $X_3\cup Z_3$ is similar. By an argument similar to \eqref{eq_partition_lemma_2}, we can apply Fact~\ref{partition_fact_1} twice to partition $Z_1$ into $\{A_{1},\dots,A_{r},Z_1'\}$, where $r=O(k\log k)$, s.t. each $A_{i}$ is block-monotone of depth at least $k$ and $|Z_1'|\leq\max\{|Z_1|/(3ck)^2, (9c^2k-1)^2\}$. If $|Z_1'|\leq (9c^2k-1)^2$, we have partitioned $X_1\cup Z_1$ into $O(k\log k)$ block-monotone point sets of depth at least $k$, which are $\{A_{1},\dots,A_{r},X_1\}$, and a remaining set $Z_1'$ of size $O(k^2)$ as claimed.

If $|Z_1'|\leq |Z_1|/(3ck)^2$, since $|Z_1|\leq |Y_{i_0}|$, we have $|Z_1'|\leq |Y_{i_0}|/(3ck)^2\leq |B_i|$ for all $i\in [k]$. We can take a partition $B_i=B_{i}'\cup B_i''$ with $|B_i'|=|Z_1'|$. We observe that $X_1':=B_1'\cup \dots\cup B_{k}'\cup Z'_1$ is block-decreasing of depth $k+1$ and $X_1'':=B''_{1}\cup \dots\cup B''_{k}$ is block-decreasing of depth $k$ by their constructions. So we have partitioned $X_1\cup Z_1$ into $O(k\log k)$ block-monotone point sets of depth at least $k$, which are $\{A_1,\dots, A_r, X_1', X_1''\}$, as claimed.

\medskip

Finally, when $t=0$ is given in the hypothesis, the condition \eqref{eq:partition_lemma_3} and its opposite, i.e. $\bigcup_{j=i+1}^{2t+1} Y_j \subset R(Y_i)\cap D(Y_i)$ for all $i\in [2t]$, are both trivially true. Hence, when $|Y_{i_0}|>(3k-1)^2$, no matter whether $X$ is block-increasing or block-decreasing, we can always use the arguments in Case 2 to conclude the lemma as described in (ii).
\end{proof}

\section{Applications}\label{sec:applications}
\subsection{Mutually avoiding sets}
We devote this subsection to the proof of Theorem~\ref{avoiding}.  The proof is essentially the same as in \cite{aronov1991crossing}, but we include it here for completeness.  Given a non-vertical line $L$ in the plane, we denote $L^+$ to be the closed upper-half plane defined by $L$, and $L^-$ to be the closed lower-half plane defined by $L$. We need the following result, which is Lemma~1 in \cite{aronov1991crossing}.
\begin{lemma}\label{avoiding_lemma}
Let $P,Q\subset \mathbb{R}^2$ be two $n$-element point sets with $P$ and $Q$ separated by a non-vertical line $L$ and $P\cup Q$ in general position. Then for any positive integer $m\leq n$, there is another non-vertical line $H$ s.t. $|H^+\cap P|=|H^+\cap Q|=m$ or $|H^-\cap P|=|H^-\cap Q|=m$.
\end{lemma}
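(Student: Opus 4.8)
The plan is to use a ham-sandwich / continuity argument, sweeping a line across the plane and tracking how it splits $P$ and $Q$. Since $P$ and $Q$ are separated by the non-vertical line $L$, I may assume after an affine change of coordinates that $L$ is the $x$-axis, $P$ lies strictly above it, and $Q$ lies strictly below it. The idea is to parametrize a family of non-vertical lines $H_\theta$ by a direction $\theta$ and, for each direction, choose the translate of that line which cuts off exactly $m$ points of $P$ on its $P$-side; then examine how many points of $Q$ land on the same side, and show this count must pass through $m$.

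First I would set up the sweep. For a generic direction $\theta$ (one not parallel to any line through two points of $P\cup Q$), order the points of $P$ by the linear functional in direction $\theta$; let $H_\theta$ be a line in direction $\theta$ positioned so that exactly $m$ points of $P$ lie on one fixed side of it — say the side ``toward $L$'' / toward $Q$ — and $n-m$ on the other. Define $g(\theta)$ to be the number of points of $Q$ on that same $Q$-ward side of $H_\theta$. The key geometric observation is the behavior at two antipodal directions: when $\theta$ is (close to) horizontal, the $m$ points of $P$ cut off toward $Q$ force $H_\theta$ to sit essentially at the level of the $m$-th lowest point of $P$, so $H_\theta$ lies above all of $Q$ and hence $g(\theta)=n$ — wait, I need to be careful about orientation; the clean way is to compare $\theta$ and $\theta+\pi$, i.e. the same line with its two sides swapped, where the roles of "$m$ points of $P$" get taken over by "$n-m$ points of $P$", and correspondingly $g$ jumps between a value $\le m$ and a value $\ge m$ (or vice versa) as we pass between the two orientation regimes. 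Then a discrete intermediate value argument as $\theta$ rotates — $g$ changes by at most $1$ at each combinatorial event — yields a direction with $g(\theta)=m$, giving $|H^+\cap P|=|H^+\cap Q|=m$ or the lower-half analogue.

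The main obstacle, and the step requiring the most care, is making the intermediate-value argument rigorous: I need to (a) pin down a clean definition of $H_\theta$ that is well-defined and varies "semicontinuously" in $\theta$ despite the discrete jumps when the sweeping line passes a point of $P$, (b) verify that $g$ indeed changes by at most $1$ at each event (which uses general position of $P\cup Q$, so that no event involves two points simultaneously and no point of $Q$ sits on the line at a $P$-event), and (c) correctly identify the two "boundary" directions where $g$ is forced to be $\ge m$ and $\le m$ respectively — this is where the hypothesis that $P$ and $Q$ are strictly separated by $L$ is essential, since it guarantees that for a nearly-$L$-parallel sweep the line cutting off $m$ points of $P$ either misses $Q$ entirely or captures all of $Q$, depending on which side we track. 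Once these three points are nailed down, the existence of the desired line $H$ follows immediately, and the "or" in the statement corresponds precisely to which of the two orientation regimes the crossing value $g(\theta)=m$ is attained in.
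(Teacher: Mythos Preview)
The paper does not give its own proof of this lemma: it simply quotes it as Lemma~1 of Aronov et al.~\cite{aronov1991crossing} and moves on. So there is no ``paper's proof'' to compare against.

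That said, your rotation/discrete-intermediate-value approach is exactly the standard argument for this type of statement and is sound in outline; the three obstacles (a)--(c) you flag are the right ones and are all routine once set up carefully. One clarification worth recording, since you were uncertain about it: the ``or'' in the conclusion is not an artifact of which side of $L$ holds $P$ versus $Q$. With the paper's convention that $H^{+}$ and $H^{-}$ are the \emph{geometric} upper and lower closed half-planes of a non-vertical line, the directed line you are rotating must pass through a vertical direction somewhere between the two boundary regimes you identify, and it is precisely at that vertical direction that the ``left side'' of your sweep switches from coinciding with $H^{+}$ to coinciding with $H^{-}$. Thus the crossing $g(\theta)=m$ lands you in the $H^{+}$ case or the $H^{-}$ case according to which side of vertical it occurs on; if it occurs exactly at the vertical direction, general position lets you tilt the line slightly without changing the counts. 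This is the honest source of the disjunction, and filling it in removes the wobble in your step~(c).
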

\begin{proof}[Proof of Theorem \ref{avoiding}]  Let $k$ be as given and $n > 24k^2$.  Let $P$ be an $n$-element point set in the plane in general position.  We start by taking a non-vertical line $L$ to partition the plane s.t. each half-plane contains $\lfloor \frac{n}{2} \rfloor$ points from $P$. Then by Lemma~\ref{avoiding_lemma}, we obtain a non-vertical line $H$ with, say, $H^+\cap(L^{+}\cap P)=H^+\cap (L^-\cap P)=\lfloor \frac{n}{6}\rfloor$. Next, we find a third line $N$, by first setting $N = H$, and then sweeping $N$ towards the direction of $H^-$, keeping it parallel with $H$, until $H^-\cap N^+\cap L^+$ or $H^-\cap N^+\cap L^-$ contains $\lfloor\frac{n}{6}\rfloor$ points from $P$. Without loss of generality, let us assume $Q:=P\cap (H^-\cap N^+\cap L^+)$ first reaches $\lfloor\frac{n}{6}\rfloor$ points, and the region $H^-\cap N^+\cap L^-$ has less than $\lfloor\frac{n}{6}\rfloor$ points from $P$. Hence, both $Q_l:=P\cap (H^+\cap L^-)$ and $Q_r:=P\cap (N^-\cap L^-)$ have at least $\lfloor \frac{n}{6}\rfloor$ points. See Figure~\ref{fig:avoiding1} for an illustration.

\begin{figure}[ht]
\centering
\includegraphics{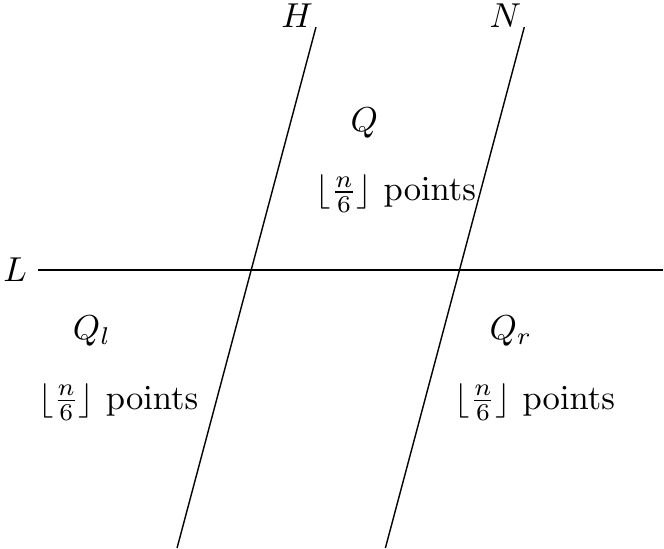}
\caption{The division of plane into regions according to $L,H,N$.}
\label{fig:avoiding1}
\end{figure}

We can apply an affine transformation so that $L$ and $H$ are perpendicular, and $N$ is on the right side of $H$. Think of $L$ as the $x$-axis, $H$ as the $y$-axis, and $N$ as a vertical line with a positive $x$-coordinate. After ordering the elements in $Q$ according to their $x$-coordinates, we apply Theorem \ref{main} to $Q$ to obtain disjoint subsets $Q_1,\dots,Q_{2k+1} \subset Q$ s.t. $(Q_1,\dots, Q_{2k+1})$ is block-monotone of depth $2k + 1$ and block-size $\Omega(n/k^2)$, where each entry represents its $y$-coordinate. Without loss of generality, we can assume it is block-decreasing, otherwise we can work with $Q_r$ rather than $Q_l$ in the following arguments.
\begin{figure}[ht]
   \centering
   \includegraphics{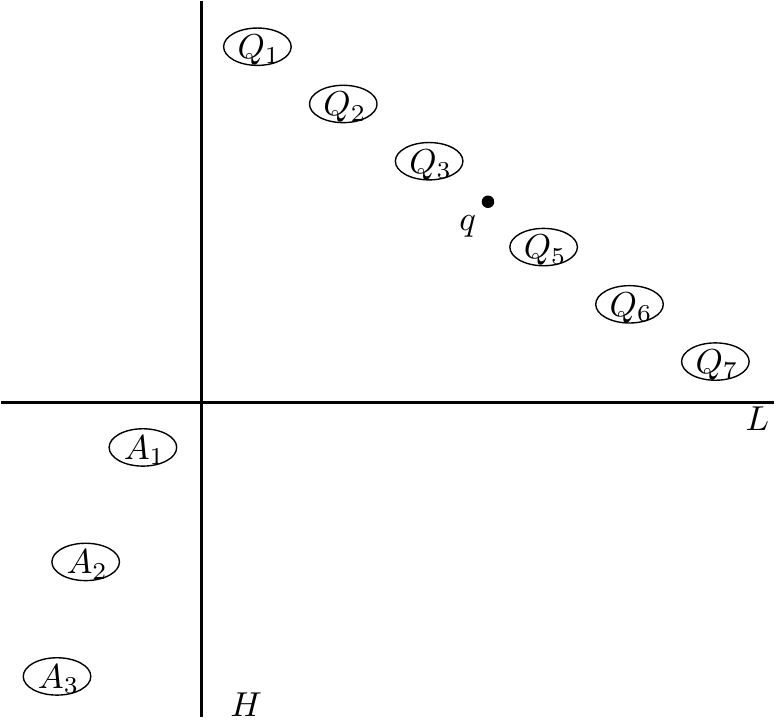}
    \caption{An example when $A_i$'s are increasing. Each ellipse represents a cluster of points as defined in the proof.}
    \label{fig:avoiding2}
\end{figure}

Now fix a point $q \in Q_{k+1}$. We express the points in $Q_l$ in polar coordinates $(\rho,\theta)$ with $q$ being the origin. We can assume no two points in $Q_l$ are at the same distance to $q$, otherwise a slight perturbation may be applied. By ordering the points in $Q_l$ with respect to $\theta$, in counter-clockwise order, we apply Theorem~\ref{main} to $Q_l$ to obtain disjoint subsets $A_1,\dots,A_k \subset Q_l$ s.t. $(A_1,\dots,A_k)$ is block-monotone of depth $k$ and block-size $\Omega(n/k^2)$, where each entry represents its distance to $q$. If it's block-decreasing, take $B_i=Q_{i}$ for $i\in[k]$, and if it's block-increasing, take $B_i=Q_{k +1+i}$. It is easy to check that the sets $\{A_1,\ldots, A_k\}$ and $\{B_1,\ldots, B_k\}$ have the claimed properties. See Figure~\ref{fig:avoiding2} for an illustration.\end{proof}

\subsection{Monotone biarc diagrams}
We devote this subsection to the proof of Theorem~\ref{biarc}. Our proof is constructive, hence implying a recursive algorithm for the claimed outcome.

We start by making the simple observation that our main results hold for sequences of (not necessarily distinct) real numbers, if the term ``\emph{block-monotone}'' now refers to being block-nondecreasing or block-nonincreasing. More precisely, a sequence $(a_1, a_2,\dots,a_{ks})$ of real numbers is said to be \emph{block-nondecreasing} (resp. \emph{block-nonincreasing}) with \emph{depth} $k$ and \emph{block-size} $s$ if every subsequence $(a_{i_1},a_{i_2},\dots, a_{i_k})$, for $(j-1)s<i_j\leq js$, is nondecreasing (resp. nonincreasing).

\begin{theorem}\label{partition_notdistinct}
For any positive integer $k$, every finite sequence of (not necessarily distinct) real numbers can be partitioned into at most $O(k^2\log k)$ block-monotone subsequences of depth at least $k$ upon deleting at most $(k-1)^2$ entries.
\end{theorem}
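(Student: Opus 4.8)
The plan is to reduce Theorem~\ref{partition_notdistinct} to Theorem~\ref{partition} (equivalently Theorem~\ref{partition_pointset}) by a standard tie-breaking perturbation. Given a finite sequence $A=(a_1,\dots,a_n)$ of real numbers, possibly with repetitions, I would first replace each entry $a_i$ by $\tilde a_i := a_i + i\varepsilon$ for a sufficiently small $\varepsilon>0$. Since the sequence is finite, there is a threshold $\varepsilon_0>0$ such that for all $0<\varepsilon<\varepsilon_0$ the perturbed values $\tilde a_1,\dots,\tilde a_n$ are pairwise distinct and, moreover, for any pair $i<j$ with $a_i\neq a_j$ we have $\tilde a_i<\tilde a_j$ if and only if $a_i<a_j$; ties $a_i=a_j$ with $i<j$ become $\tilde a_i<\tilde a_j$. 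Fix such an $\varepsilon$ and let $\tilde A=(\tilde a_1,\dots,\tilde a_n)$.

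Next I would apply Theorem~\ref{partition} to the sequence $\tilde A$ of distinct reals: it partitions $\tilde A$, after deleting a set $E$ of at most $(k-1)^2$ indices, into $O(k^2\log k)$ block-monotone subsequences of depth at least $k$. Carry this index partition back to the original sequence $A$, deleting the same index set $E$. The key observation is that each block-monotone piece of $\tilde A$ pulls back to a block-nondecreasing (resp.\ block-nonincreasing) piece of $A$: if a subsequence of $\tilde A$ is block-increasing of depth $k$ and block-size $s$, then for every choice $(\tilde a_{i_1},\dots,\tilde a_{i_k})$ with indices in consecutive blocks and $i_1<\dots<i_k$ we have $\tilde a_{i_1}<\dots<\tilde a_{i_k}$, which by the comparison property above forces $a_{i_1}\le \dots\le a_{i_k}$ — strict inequalities survive, and equalities are the only thing that can be introduced. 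Thus the same index partition witnesses a partition of $A$ into $O(k^2\log k)$ block-nondecreasing or block-nonincreasing subsequences of depth at least $k$, upon deleting at most $(k-1)^2$ entries, which is exactly the claim with the relaxed meaning of ``block-monotone.''

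There is essentially no serious obstacle here; the only point requiring a moment's care is verifying that the perturbation is order-preserving on the \emph{strict} comparisons while only possibly collapsing equalities, and that a single $\varepsilon$ works simultaneously for all $\binom n2$ pairs — this is immediate by finiteness, taking $\varepsilon < \tfrac1n\min\{|a_i-a_j| : a_i\neq a_j\}$. One should also note the degenerate case where all $a_i$ are equal, or more generally where $n\le (k-1)^2$, in which case we may delete the entire sequence and the statement holds vacuously; otherwise the argument above applies verbatim. Since Theorem~\ref{partition}'s proof is constructive and runs in time polynomial in $k$ and $n$, and the perturbation and pull-back are linear-time operations, this also yields a polynomial-time algorithm for the partition in Theorem~\ref{partition_notdistinct}.
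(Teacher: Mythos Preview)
Your proposal is correct and follows exactly the approach the paper sketches: perturb the entries to make them distinct, apply Theorem~\ref{partition}, and observe that block-increasing (resp.\ block-decreasing) subsequences of the perturbed sequence pull back to block-nondecreasing (resp.\ block-nonincreasing) subsequences of the original. The paper compresses this into a single sentence, whereas you spell out the choice of $\varepsilon$ and the verification; the only cosmetic quibble is that the degenerate case ``all $a_i$ equal with $n>(k-1)^2$'' is not handled by deletion but is trivially one block-nondecreasing subsequence (or, equivalently, any $\varepsilon>0$ works since there are no strict comparisons to preserve).
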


To see our main results imply the above variation, it suffices to slightly perturb the possibly equal entries of a given sequence until all entries are distinct. Algorithms for our main results can also be applied after such a perturbation.

We need the following lemma in \cite{yehuda1998partitioning} for Theorem~\ref{biarc}.
\begin{lemma}\label{biarc_lemma}
For any graph $G=(V,E)$ with $V = [n]$, there exists $b\in [n]$ s.t. both the induced subgraphs of $G$ on $\{1,2,\dots,b\}$ and $\{b+1,b+2,\dots,n\}$ have no more than $|E|/2$ edges.
\end{lemma}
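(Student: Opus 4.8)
The plan is to prove Lemma~\ref{biarc_lemma} by a simple discrete intermediate value argument on the prefix edge counts. First I would fix the graph $G=(V,E)$ with $V=[n]$ and, for each $b\in\{0,1,\dots,n\}$, let $f(b)$ denote the number of edges of $G$ with both endpoints in $\{1,2,\dots,b\}$ (so $f(0)=0$ and $f(n)=|E|$), and symmetrically let $g(b)$ be the number of edges with both endpoints in $\{b+1,\dots,n\}$ (so $g(0)=|E|$ and $g(n)=0$). I would note that $f$ is nondecreasing in $b$ and $g$ is nonincreasing in $b$, and moreover that when $b$ increases by $1$, $f(b)$ increases by at most one more than the number of edges incident to the new vertex $b$ — but rather than track increments, the cleaner route is to consider $h(b)=f(b)-g(b)$, which satisfies $h(0)=-|E|\le 0$ and $h(n)=|E|\ge 0$.

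Next I would invoke discreteness: as $b$ ranges from $0$ to $n$, $h$ moves from a nonpositive value to a nonnegative value, so there is a smallest index $b^\star$ with $h(b^\star)\ge 0$. For this $b^\star$ we have $h(b^\star)\ge 0$, i.e. $f(b^\star)\ge g(b^\star)$; and either $b^\star=0$ (in which case $g(0)=|E|$ forces $|E|=0$ and the statement is trivial) or $h(b^\star-1)<0$, i.e. $g(b^\star-1)>f(b^\star-1)\ge 0$. The key inequality I need is that neither $f(b^\star)$ nor $g(b^\star)$ exceeds $|E|/2$. Since the edge sets counted by $f(b)$ and $g(b)$ are disjoint (one lives entirely in the first block, the other entirely in the second), we always have $f(b)+g(b)\le|E|$. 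Combined with $f(b^\star)\ge g(b^\star)$ this gives $g(b^\star)\le|E|/2$ immediately. For the bound $f(b^\star)\le|E|/2$ I would use minimality of $b^\star$: $f(b^\star-1)\le g(b^\star-1)-1< g(b^\star-1)$ together with $f(b^\star-1)+g(b^\star-1)\le|E|$ yields $f(b^\star-1)< |E|/2$; then I must pass from $b^\star-1$ to $b^\star$, observing that $f(b^\star)\le f(b^\star-1)+\deg(b^\star)$ is too weak, so instead I would note $f(b^\star)\le |E|-g(b^\star)$ and, since $g$ is nonincreasing, $g(b^\star)\ge g(n)=0$, which again is too weak — the honest fix is to take $b^\star$ to be the smallest index with $f(b^\star)\ge g(b^\star)$ and then directly check both $f(b^\star)=|E|-(\text{edges crossing or entirely in second block}) \le |E| - g(b^\star)$ and $g(b^\star)\le f(b^\star)$, whence $2f(b^\star)\ge f(b^\star)+g(b^\star)$ is the wrong direction; so I will instead argue via the pair $(b^\star-1, b^\star)$: one of these two splits works because $f(b^\star-1)\le|E|/2$ (shown above) and $g(b^\star)\le|E|/2$ (shown above), and at exactly one of these two cut positions both quantities are simultaneously small — concretely, if $f(b^\star)\le|E|/2$ we take $b=b^\star$, and otherwise $f(b^\star)>|E|/2\ge g(b^\star)$ forces (since $g$ is nonincreasing) $g(b^\star-1)\ge g(b^\star)$ but also $g(b^\star-1)\le |E|-f(b^\star-1)$ and $f(b^\star-1)<f(b^\star)$; choosing $b=b^\star-1$ then gives $f(b^\star-1)\le|E|/2$ by minimality and $g(b^\star-1)\le |E| - f(b^\star-1)$, and one checks $f(b^\star-1)\le g(b^\star-1)$ forces $g(b^\star-1)\ge|E|/2$ is impossible only if $f(b^\star-1)\le|E|/2$, which holds, closing the case.

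The main obstacle, as the tangle above indicates, is purely bookkeeping: making sure that at the chosen cut $b$ \emph{both} induced subgraphs have at most $|E|/2$ edges simultaneously, rather than just one of them. The clean way to finesse this, which I would actually write up, is: let $b$ be the smallest index in $\{0,\dots,n\}$ with $f(b)\ge|E|/2$ (such $b$ exists since $f(n)=|E|\ge|E|/2$). Then $f(b)\ge|E|/2$, and by minimality $f(b-1)<|E|/2$ (or $b=0$ and $f(0)=0\ge|E|/2$ forces $|E|=0$). Since adding vertex $b$ to the first block only adds edges incident to $b$ and these are the edges counted in $f(b)$ but not $f(b-1)$, and every such edge would otherwise be a crossing edge or in the second block — the point is that $f(b)$ could jump past $|E|/2$, so instead I take $b$ to be the smallest index with $g(b)\le|E|/2$; then $g(b)\le|E|/2$ gives the second-block bound, and for the first block I use $f(b)\le|E|-g(b)$ is again too weak, so the genuinely correct statement to prove is that there exists $b$ with $f(b)\le|E|/2$ AND $g(b)\le|E|/2$, which follows because $f(b)+g(b)\le|E|$ for all $b$ while $f$ increases from $0$ and $g$ decreases to $0$: pick the largest $b$ with $f(b)\le|E|/2$ (exists, $b\ge0$ since $f(0)=0$); if $b=n$ then $g(n)=0\le|E|/2$ and we are done; if $b<n$ then $f(b+1)>|E|/2$, so $g(b+1)\le|E|-f(b+1)<|E|/2$, and since $g$ is nonincreasing, $g(b)\le\cdots$ is \emph{not} implied — hence the truly clean choice is: since $f(b+1)-f(b)$ equals the number of edges from $b+1$ into $\{1,\dots,b\}$, and separately $g(b)-g(b+1)$ equals the number of edges from $b+1$ into $\{b+2,\dots,n\}$, these two quantities need not coincide; nevertheless $f$ is a nondecreasing integer sequence from $0$ to $|E|$, so there is some $b$ with $f(b)\le |E|/2$ and $f(b+1)\ge\lceil|E|/2\rceil$; at that $b$, $g(b)=|E|-f(b)-(\#\text{crossing edges at cut }b)\le|E|-f(b+1)\le|E|/2$, using that the crossing edges at cut $b$ include all edges from $b+1$ into the first block hence $f(b)+(\#\text{crossing at }b)\ge f(b+1)$. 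This last identity, $g(b)\le|E|-f(b+1)$, is the crux, and combined with $f(b)\le|E|/2$ it gives both bounds at the single cut $b$, completing the proof.
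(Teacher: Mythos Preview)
Your final argument is correct and is exactly the paper's proof: take the largest $b$ with $f(b)\le|E|/2$; then $f(b+1)>|E|/2$, and since the edge sets of $G_{[b+1]}$ and $G_{\{b+1,\dots,n\}}$ are disjoint (they share only the vertex $b+1$, hence no edge), $g(b)\le|E|-f(b+1)<|E|/2$. Everything preceding that last paragraph is dead-end exploration; the clean write-up is those two sentences, and you should delete the rest.
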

\begin{proof}
For $U \subset [n]$, let $G_U$ denote the induced subgraph of $G$ on $U$.  Let $b$ be the largest among $[n]$ s.t. $E(G_{[b]})\leq \frac{|E|}{2}$, so $E(G_{[b+1]})>\frac{|E|}{2}$. Notice that $E(G_{[b+1]})$ and $E(G_{[n]\setminus[b]})$ are two disjoint subsets of $E$, so $E(G_{[n]\setminus[b]})\leq |E|- E(G_{[b+1]})<\frac{|E|}{2}$, as wanted.
\end{proof}
\begin{proof}[Proof of Theorem \ref{biarc}] We prove by induction on $|E|$. The base case when $|E| = 1$ is trivial.  For the inductive step, by the given order on $V$, we can identify $V$ with $[n]$. We find such a $b$ according to Lemma~\ref{biarc_lemma}. Consider the set $E'$ of edges between $[b]$ and $[n]\setminus [b]$.  By writing each edge $e \in E'$ as $(x,y)$, where $x\in [b]$ and $y\in [n]\setminus[b]$, we order the elements in $E'$ lexicographically: for $(x,y),(x',y') \in E$, we have $(x,y) < (x',y')$ when $x < x'$ or when $x = x'$ and $y < y'$.

Given the order on $E'$ described above, consider the sequence of right-endpoints in $E'$. We apply Theorem~\ref{partition_notdistinct} with parameter $k = \lceil\epsilon^{-1}\rceil$ to this sequence, to decompose it into $C_k=O(k^2\log k)$ many block-monotone sequences of depth $k$, upon deleting at most $(k-1)^2$ entries. For each block-monotone subsequence of depth $k$, we draw the corresponding edges on a single page as follows.  If the subsequence is block-nonincreasing of depth $k$ and block-size $s$, we draw the corresponding edges as semicircles above the spine.  Then, two edges cross only if they come from the same block.  Since there are a total of $\binom{ks}{2}$ pairs of edges, and only $k\binom{s}{2}$ such pairs from the same block, the fraction of pairs of edges that cross in such a drawing is at most $1/k$. See Figure~\ref{fig:biarc_example}(i). Similarly, if the subsequence is block-nondecreasing of depth $k$ and block-size $s$, we draw the corresponding edges as monotone biarcs, consisting of two semicircles with the first (left) one above the spine, and the second (right) one below the spine. Furthermore, we draw the monotone biarc s.t. it crosses the spine at $b + 1 - \ell/n - r/(2n^2)$, where $\ell$ and $r$ are the left and right endpoints of the edge respectively. See Figure~\ref{fig:biarc_example}(ii).  By the same argument above, the fraction of pairs of edges that cross in such a drawing is at most $1/k$.

Hence, $E'$ can be decomposed into $C_k+(k-1)^2$ many monotone biarc diagrams, s.t. each monotone biarc diagram has at most $1/k$-fraction of pairs of edges that are crossing.

\begin{figure}[ht]
    \centering
    \includegraphics{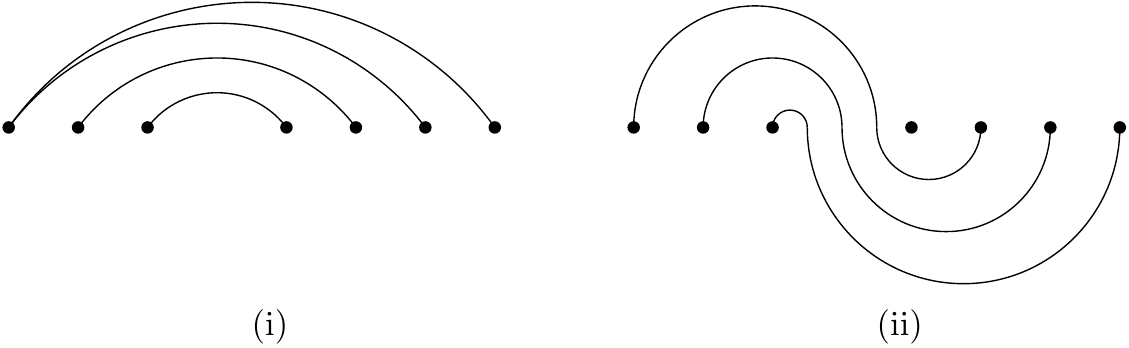}
    \caption{(i) a proper arc diagram. (ii) a monotone biarc diagram.}
    \label{fig:biarc_example}
\end{figure}

For edges within $[b]$, Lemma \ref{biarc_lemma} and the inductive hypothesis tell us that they can be decomposed into $(C_k+(k-1)^2)(\log|E|-1)$ monotone biarc diagrams, s.t. the fraction of pairs of edges that are crossing in each diagram is at most $1/k$. The same argument applies to the edges within $[n]\setminus[b]$. However, notice that two such monotone biarc diagrams, one in $[b]$ and another in $[n]\setminus[b]$, can be drawn on the same page without introducing more crossings. Hence, we can decompose $E\backslash E'$ into at most $(C_k+(k-1)^2)(\log|E|-1)$ such monotone biarc diagrams, giving us a total of $(C_k+(k-1)^2)\log|E|$ monotone biarc diagrams.\end{proof}

\section{Final remarks}\label{sec:remarks}

1. We call a sequence $(a_1,a_2,\dots,a_n)$ of $n$ distinct real numbers \emph{$\epsilon$-increasing} (resp. \emph{$\epsilon$-decreasing}) if the number of decreasing (resp. increasing) pairs $(a_i,a_j)$, where $i<j$, is less than $\epsilon n^2$. And we call a sequence \emph{$\epsilon$-monotone} if it's either $\epsilon$-increasing or $\epsilon$-decreasing. Clearly, a block-monotone sequence of depth $k$ is an $\epsilon$-monotone sequence with $\epsilon=k^{-1}$.  Hence, Theorem~\ref{main} implies the following.

\begin{corollary}\label{main_eps}
For all $n > 0$ and $\epsilon>0$, every sequence of $n$ distinct real numbers contains an $\epsilon$-monotone subsequence of length at least $\Omega(\epsilon n)$.
\end{corollary}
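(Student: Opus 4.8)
The plan is to deduce Corollary~\ref{main_eps} directly from Theorem~\ref{main}, falling back on the classical Erd\H os--Szekeres theorem to cover the range of small $n$. We may assume $\epsilon<1$, since otherwise the whole input sequence already has fewer than $\binom n2<\epsilon n^2$ monotone pairs of each type and is itself $\epsilon$-monotone of length $n=\Omega(\epsilon n)$. Set $k:=\lceil 1/\epsilon\rceil$, so that $1/\epsilon\le k<1/\epsilon+1$. The one combinatorial ingredient is the observation recorded just before the statement, which I would make precise as follows: a block-monotone sequence of depth $k$ and block-size $s$, hence of length $N=ks$, is $(1/k)$-monotone. Indeed, assuming it is block-increasing, every pair $(a_i,a_j)$ with $i<j$ taken from two different blocks is increasing, so all decreasing pairs lie inside a single block; there are fewer than $k\binom s2<ks^2/2=N^2/(2k)<\epsilon N^2$ of them, using $k\ge 1/\epsilon$. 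In particular such a sequence is $\epsilon$-monotone.

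With this in hand the argument splits into two cases according to the hypothesis of Theorem~\ref{main}. If $n>(k-1)^2$, apply Theorem~\ref{main} with this $k$ to obtain a block-monotone subsequence of depth $k$ and block-size $s=\Omega(n/k^2)$; by the previous paragraph it is $\epsilon$-monotone, and its length is $ks=\Omega(n/k)=\Omega(\epsilon n)$, where I used $k=O(1/\epsilon)$. If instead $n\le(k-1)^2$, then since $k-1<1/\epsilon$ we have $n<1/\epsilon^2$, i.e. $\epsilon n<\sqrt n$; the classical Erd\H os--Szekeres theorem supplies a genuinely monotone --- hence $\epsilon$-monotone --- subsequence of length at least $\sqrt n>\epsilon n=\Omega(\epsilon n)$, which finishes this case.

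I do not expect a genuine obstacle here; the whole content is the one-line counting argument above that converts ``depth $k$'' into ``$(1/k)$-monotone''. The only care needed is the constant bookkeeping that glues the two regimes: choosing $k=\Theta(1/\epsilon)$ so that $\Omega(n/k)=\Omega(\epsilon n)$ in the first case, and observing that the Erd\H os--Szekeres bound $\sqrt n$ already dominates $\epsilon n$ once $n\le(k-1)^2<1/\epsilon^2$ in the second. If one wanted an explicit constant in the $\Omega(\epsilon n)$, it could be read off from the constant $c=40$ of Theorem~\ref{path}.
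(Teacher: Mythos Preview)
Your proof is correct and follows exactly the approach the paper takes: the paper simply notes that a block-monotone sequence of depth $k$ is $(1/k)$-monotone and then invokes Theorem~\ref{main}, leaving the bookkeeping implicit. Your write-up supplies the details the paper omits---the pair-counting that turns depth $k$ into $(1/k)$-monotonicity, the handling of $n\le (k-1)^2$ via Erd\H os--Szekeres, and the trivial case $\epsilon\ge 1$---so there is nothing to add.
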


\noindent This corollary is also asymptotically best possible. To see this, for $n>(k-1)^2$ and a sequence $A=(a_i)_{i=1}^n$ of distinct real numbers, we can apply Corollary~\ref{main_eps} with $\epsilon=(64k)^{-1}$ to $A$ and obtain an $\epsilon$-monotone subsequence $S\subset A$ and then apply Lemma~2.1 in \cite{pach2021planar} to $S$ to obtain a block-monotone subsequence of depth $k$ and block-size $\Omega(n/k^2)$. So Corollary~\ref{main_eps} implies Theorem~\ref{main}.

\medskip

\noindent 2. Let $f(k)$ be the smallest number $N$ s.t. every finite sequence of distinct real numbers can be partitioned into at most $N$ block-monotone subsequences of depth at least $k$ upon deleting $(k-1)^2$ entries.  Our Theorem~\ref{partition} is equivalent to saying $f(k)=O(k^2\log k)$. The $K(k,2)$-type construction in Remark~\ref{extreme_construction_remark} implies $f(k)\geq k$. What is the asymptotic order of $f(k)$?

\medskip
\noindent 3. We suspect our algorithm presented in Theorem~\ref{main} can be improved. How fast can we compute a block-monotone subsequence as large as asserted in Theorem~\ref{main}? Can we do it within time almost linear in $n$ for all $k$?

\medskip
\noindent {\bf Acknowledgement.} We wish to thank the anonymous SoCG referees for their valuable suggestions.

\bibliographystyle{abbrv}
\bibliography{bib}

\begin{thebibliography}{10}

\bibitem{aronov1991crossing}
B.~Aronov, P.~Erd{\H{o}}s, W.~Goddard, D.~J. Kleitman, M.~Klugerman, J.~Pach,
  and L.~J. Schulman.
\newblock Crossing families.
\newblock In {\em Proceedings of the seventh annual symposium on Computational
  geometry}, pages 351--356, 1991.

\bibitem{yehuda1998partitioning}
R.~Bar-Yehuda and S.~Fogel.
\newblock Partitioning a sequence into few monotone subsequences.
\newblock {\em Acta Informatica}, 35(5):421--440, 1998.

\bibitem{barany1998positive}
I.~B{\'a}r{\'a}ny and P.~Valtr.
\newblock {A positive fraction Erd{\H o}s-Szekeres theorem}.
\newblock {\em Discrete \& Computational Geometry}, 19(3):335--342, 1998.

\bibitem{BH}
F.~Bernhart and P.~C. Kainen.
\newblock The book thickness of a graph.
\newblock {\em Journal of Combinatorial Theory, Series B}, 27(3):320--331,
  1979.

\bibitem{deberg2008computational}
M.~de~Berg, O.~Cheong, M.~van Kreveld, and M.~Overmars.
\newblock {\em Computational Geometry: Algorithms and Applications}.
\newblock Springer-Verlag, Berlin Heidelberg, 3rd edition, 2008.

\bibitem{di2005curve}
E.~Di~Giacomo, W.~Didimo, G.~Liotta, and S.~K. Wismath.
\newblock Curve-constrained drawings of planar graphs.
\newblock {\em Computational Geometry}, 30(1):1--23, 2005.

\bibitem{es35}
P.~Erd{\H o}s and G.~Szekeres.
\newblock A combinatorial problem in geometry.
\newblock {\em Compositio Mathematica}, 2:463--470, 1935.

\bibitem{FPSS}
J.~Fox, J.~Pach, B.~Sudakov, and A.~Suk.
\newblock {Erd{\H o}s–Szekeres-type theorems for monotone paths and convex
  bodies}.
\newblock {\em Proceedings of the London Mathematical Society},
  105(5):953--982, 2012.

\bibitem{fredman1975computing}
M.~L. Fredman.
\newblock On computing the length of longest increasing subsequences.
\newblock {\em Discrete Mathematics}, 11(1):29--35, 1975.

\bibitem{MSW}
K.~Milans, D.~Stolee, and D.~West.
\newblock {Ordered Ramsey theory and track representations of graphs}.
\newblock {\em Journal of Combinatorics}, 6(4):445--456, 2015.

\bibitem{mirzaei2020positive}
M.~Mirzaei and A.~Suk.
\newblock A positive fraction mutually avoiding sets theorem.
\newblock {\em Discrete Mathematics}, 343(3):111730, 2020.

\bibitem{MS}
G.~Moshkovitz and A.~Shapira.
\newblock {Ramsey Theory, integer partitions and a new proof of the Erd{\H
  o}s–Szekeres Theorem}.
\newblock {\em Advances in Mathematics}, 262:1107--1129, 2014.

\bibitem{pach2021planar}
J.~Pach, N.~Rubin, and G.~Tardos.
\newblock Planar point sets determine many pairwise crossing segments.
\newblock {\em Advances in Mathematics}, 386:107779, 2021.

\bibitem{por2002partitioned}
A.~P{\'o}r and P.~Valtr.
\newblock {The partitioned version of the Erd{\H o}s-Szekeres theorem}.
\newblock {\em Discrete \& Computational Geometry}, 28(4):625--637, 2002.

\bibitem{St}
J.~M. Steele.
\newblock {Variations on the monotone subsequence theme of Erd{\H o}s and
  Szekeres}.
\newblock In {\em Discrete Probability and Algorithms}, pages 111--131, New
  York, NY, 1995. Springer New York.

\bibitem{valtr1997mutually}
P.~Valtr.
\newblock On mutually avoiding sets.
\newblock In {\em The Mathematics of Paul Erd{\"o}s II}, pages 324--328.
  Springer, 1997.

\end{thebibliography}
\end{document}